\newtheorem{theorem}{Theorem}[section]
\newtheorem{coro}[theorem]{Corollary}
\newtheorem{lemma}[theorem]{Lemma}
\newtheorem{definition}[theorem]{Definition}
\newtheorem{ex}[theorem]{Example}
\newtheorem{assum}{Assumption}[section]
\renewcommand{\epsilon}{\varepsilon}
\newcommand{\cdo}{\, \cdot \,}
\newcommand{\ro}{\rho}
\newcommand\C{\mathbf{C}}
\newcommand\R{\mathbf{R}}
\newcommand\Z{\mathbf{Z}}
\newcommand\wet{\widetilde{\eta}}
\newcommand{\supp}{\operatorname{supp}}
\renewcommand{\theenumi}{\alph{enumi}}
\renewcommand{\labelenumi}{\textrm{(\theenumi)}}
\begin{document}

\setcounter{section}{0}
\renewcommand{\theenumi}{\alph{enumi}}
\renewcommand{\labelenumi}{\textrm{(\theenumi)}}

\date{28/07/2008}

\title[Multivariable spectral multipliers and  quasielliptic operators]{Multivariable spectral multipliers and analysis of quasielliptic operators on fractals}

\author{Adam Sikora}

\address{Adam Sikora, Mathematical Sciences Institute,
Bldg 27,
Australian National University,
ACT 0200 Australia
}

\subjclass[2000]{Primary 42B15; Secondary 43A85, 28A80 }

\keywords{Spectral multipliers, analysis on fractals}

\begin{abstract}
We study multivariable spectral   multipliers $F(L_1,L_2)$ acting on Cartesian product of ambient spaces of two self-adjoint operators $L_1$ and  $L_2$. We prove that
if $F$ satisfies H\"ormander type differentiability condition then the operator
$F(L_1,L_2)$ is of Calder\'on-Zygmund type. We apply  obtained results to the analysis of
quasielliptic operators acting on product of some fractal spaces.  The existence and surprising properties of 
 quasielliptic operators have been recently observed in works of Bockelman, Drenning  and Strichartz. 
  This paper demonstrates that Riesz type operators corresponding to quasielliptic operators are continuous on $L^p$ spaces. This solves the problem posed in \cite[(1.3) p. 1363]{BS}. 
 \end{abstract}

\maketitle{}

I dedicate this paper to the memory of my teachers Andrzej Hulanicki and Tadeusz Pytlik. 

\section{Introduction}

Suppose that $L$ is  a  self-adjoint operator acting
on $L^2(X,\mu)$, where $X$ is a measure space with measure $\mu$.  Such an
operator admits a spectral  resolution  $E_L(\lambda)$  and for  any
bounded Borel function $F\colon \R \to \C$, we define the operator $F(L)$
by the formula
  \begin{equation*}
  F(L)=\int_\R F(\lambda) d E_L(\lambda).
  \end{equation*}
By the spectral theorem the operator $F(L)$ is continuous on $L^2(X,\mu)$.
Spectral  multiplier  theorems  investigate  sufficient conditions on
function $F$ which  ensure  that the   operator  $F(L)$ extends  to a
bounded operator on  $L^q$ for some $q$ in the range  $1\le q \le \infty$.
The theory of spectral multipliers constitutes an important field of
Harmonic analysis and there exists a vast literature devoted to the topic,
see for example \cite{Ale1, Ale2, ChS, CoS, DM, DOS, He1, Her1, Mi, MM, MRS}
and references within.  The main aim of this paper is to develop a theory
of multivariable spectral multipliers of two  self-adjoint independent operators
acting on the Cartesian product of their ambient spaces. One could also consider here three or more 
operators but, for simplicity, we limit the discussion to the two dimensional version.

\subsection{Multivariable  spectral multiplier}
We begin our discussion by explaining the definition of multivariable 
spectral multipliers. We consider
two self-adjoint operators $L_j$, $j=1,2$ acting on spaces $L^2(X_j)$. The
tensor
product operators    $L_{1} \otimes 1$ and $ 1 \otimes L_{2}$ act on
$L^2({X}_1 \times
{X}_2)$, where ${X}_1 \times
{X}_2$ is the Cartesian product of $X_1$ and $X_2$ with the product
measure $\mu=\mu_1 \times \mu_2$.
To simplify notation we will write  $L_1$ and $L_2$ instead of $L_{1}
\otimes 1$ and $ 1 \otimes L_{2}$.
Note that there is a unique spectral decomposition $E$ such that for all
Borel subsets $A\subset \R^2$, $E(A)$ is a projection on $L^2({X}_1
\times{X}_2)$ and such that for any Borel subsets $A_j \subset \R$, $j=1,2$
one has
$$
E(A_1\times{A}_2)=E_{L_1}(A_1)\otimes E_{L_2}(A_2).
$$
Hence for any function $F\colon \R^2   \to \C$ one can define the operator
$F(L_1,L_2)$ acting as operators on space $L^2({X}_1 \times {X}_2,)$ by
the formula
  \begin{equation}\label{equwd}
  F(L_1,L_2)=\int_{\R^2}F(\lambda_1,\lambda_2) d E(\lambda_1,\lambda_2).
  \end{equation}
A straightforward variation of classical spectral theory arguments shows
that for any bounded Borel function
$F\colon \R^2   \to \C$ the operator $F(L_1,L_2)$ is continuous on
$L^2({X}_1 \times {X}_2)$ and its norm is bounded by $\|F\|_{L^\infty}$.
In this paper we are looking for necessary smoothness conditions on
function $F$ so that  the operator $F(L_1,L_2)$ is bounded on a range of
other $L^p({X}_1 \times
{X}_2)$ spaces.
The  condition on function $F$ which we use is a variant of the
differentiability condition in H\"ormdander-Mikhlin Fourier multiplier result, see \cite{Her1, Mi} and
\cite[Theorem~7.9.6]{Her2}.
 
One  motivation for multivariable spectral multiplier results comes from Riesz
transform like operators, for example
$$
\frac{aL_1+bL_2}{cL_1+dL_2} \quad \mbox{or} \quad \frac{L_1L_2}{(cL_1+dL_2)^2},
$$
where $a,b\in \R$ are arbitrary real numbers and $c,d>0$. Here we assume that the operators $L_i$ are positive for 
$i=1,2$. It is easy to note that then such operators are bounded on 
$L^2({X}_1 \times
{X}_2)$. We prove that under standard
assumptions such operators are of Calder\'on-Zygmund type, that is, they are bounded on all $L^p({X}_1 \times
{X}_2)$ spaces for 
$1<p<\infty$ and of weak type $(1,1)$. Of course one can consider a much larger  family of operators of
this type.
For the Laplace operators acting on some fractal spaces and for some real numbers $c,d$  the above 
operators are still bounded on $L^2({X}_1 \times
{X}_2)$ (and as we prove it in this paper on other $L^p({X}_1 \times
{X}_2)$ spaces $1<p< \infty$) {\em even though $cd <0$}. We describe this intriguing  phenomenon in the following 
section. 

\subsection{Product of fractals and quasielliptic operators}

In \cite{BS} and \cite{DS} it is shown that for the Laplace operators defined on some fractal spaces, the set of ratios of eigenvalues have gaps. 
This means that there are intervals $(\alpha,\beta)$, $0<\alpha<\beta$, such that for any two eigenvalues $\lambda_i$,
$\lambda_j$ of the Laplacian acting on the same fractal spaces one has $\frac{\lambda_j}{\lambda_i}\notin (\alpha,\beta)$.
 For example if one considers  the Dirichlet or Neumann 
Laplacian on Sierpi\'nski Gasket $SG$  then 
$\frac{\lambda_j}{\lambda_i}\notin (\alpha,\beta)$, where
\begin{equation}\label{gap}
\alpha=\lim_{n\to \infty} \frac{\psi_n(5)}{\psi_n(3)}\approx 2.0611 \quad \mbox{and} \quad 
 \beta=\lim_{n\to \infty} \frac{\psi_n(3)}{\psi_{n+1}(3)}\approx 2.4288
\end{equation}
and   $\psi (x) = (5 -\sqrt{25 - 4x})/2$,
see \cite{BS}. 
The existence of gaps in the set of ratios of eigenvalues has a  surprising consequence. Namely one can consider
the product of two copies of such fractal spaces and operators $L_1$ and $L_2$ which are copies of the same Laplace operator acting on first and second variable respectively. Now it is easy  to notice that if $(\alpha,\beta)$ is a gap in the set of ratios of eigenvalues $\alpha<\gamma < \beta$, $a,b\in \R$, $c,d>0$ and $\frac{d}{c}=\gamma$ then the operators
\begin{equation}\label{qell}
\frac{aL_1+bL_2}{cL_1-dL_2}  \quad \mbox{or} \quad \frac{L_1L_2}{(cL_1-dL_2)^2},
\end{equation}
are bounded on $L^2$. Following \cite{BS} we call operators of the form ${cL_1-dL_2} $ {\em quasi\-elliptic}. 
It was asked in \cite{BS} whether the above operators are bounded  on other $L^p$ spaces. This question is the initial  motivation of this paper. We prove that these operators are indeed bounded on all $L^p$ spaces for $1<p<\infty$ and of
weak type $(1,1)$.

 The idea of multivariable  spectral multipliers is of independent interest. It seems to be possible to obtain more general versions of  multivariable  spectral multipliers and some Marcinkiewicz type variations of these results. However, here we concentrate on obtaining a possibly simple proof of weak type $(1,1)$ for Riesz transform type operators corresponding to quasielliptic operators. 

\subsection{Doubling condition}
Before we state our main result we have to describe our basic assumptions.
As it is usually the case in theory of spectral multipliers we require
the doubling  condition and some version of
Gaussian estimates for semigroups generated by operators $L_j$, $j=1,2$,
see \cite{Ale2, DOS}.
We assume that the considered ambient spaces
$X_j$ $j=1,2$ are equipped with a Borel measure $\mu_j$ and  distances
  $\rho_j$. Let $B(x,r)= \{ y \in X \colon \, \ro (x,y) <  r \}$ be the
open ball  centred at $x$ and radius $r$.
  We suppose  throughout  that  ${X_1},X_2$ satisfy the  doubling condition, that
is  there  exists  a constant $C$ such that
    \begin{equation}\label{doubling1.1}
    \mu_j(B(x_j, 2r)) \le C  \mu_j(B(x_j, r)) \quad \forall x_j \in X_j,
\forall r > 0, j=1,2.
    \end{equation}
  Note that   (\ref{doubling1.1}) implies that there   exist  positive
constants $C$ and $d_1,d_2$ such that
    \begin{equation}\label{d}
    \mu_j(B(x_j, t r)) \le C (1+t)^{d_j} \mu_j(B(x_j, r))  \quad \forall
t > 0,
    x_j \in X_j, r > 0.
    \end{equation}
  In the sequel we always assume that (\ref{d}) holds. Note that all fractal spaces which we discuss here 
  satisfy condition (\ref{d}). In fact for these spaces $\mu(B(x, r)) \sim r^d$ for all $r\le 1$, see \cite{Ki, Str2}.  

\subsection{Kernels of operators}
Suppose that
$T$ is a bounded operator on $L^2(X).$  We say that a measurable function
$K_T\colon X^2 \to \C$ is  the (singular) kernel  of $T$  if
  \begin{equation}\label{abcde}
 \langle T f_1,f_2\rangle  =
 \int_X T f_1\overline{f_2} d\mu  =
 \int_X K_{T}(x,y) f_1(y)\overline{f_2(x)} d\mu(x) d\mu(y).
 \end{equation}
for all $f_1,f_2\in C_c(X)$ (for all $f_1,f_2\in C_c(X)$ such that
$\supp f_1 \cap \supp f_2 = \emptyset$ respectively).
It is well known that
if $T$ is bounded from $L^1(X)$ to~$L^q(X)$, where $1<q$, then
$T$ is a kernel operator, and
$$
\|T\|_{L^1 \to L^q}
= \sup_{y\in X} \| K_{T}(\cdot , y)\|_{L^{q}}\, .
$$
  In addition, if $S$ is continuous on $L^q(X)$ then for almost all $y\in X$ 
\begin{equation}\label{ker1}
K_{ST,y}=SK_{T,y}
\end{equation}
where $K_{T,y}(x)=K_T(x,y)$. 
Next we denote the weak   type $(1,1)$ norm  of  an operator $T$ on
a measure space  $(X,\mu)$ by $\|T\|_{L^1 \to L^{1,\infty}}
=\sup \lambda  \ \mu  ( \{x\in  X : \ |Tf(x)| >  \lambda \})$, where the
supremum is taken over $\lambda > 0$ and functions $f$ with $L^1$ norm
less than one; this is often called the ``operator norm'', though in fact
it is not a norm.

In the sequel we will always require the following Gaussian estimates
for the heat kernel corresponding to the operators $L_1$ and $L_2$.
  \begin{assum}\label{1b}
  Let $L_j$, $j=1,2$ be  self-adjoint positive definite operators. We
assume that
 the semigroups generated by $L_j$ on $L^2(X_j)$ have the  kernels
  $p^{[j]}_{t}(x_j,y_j)=K_{\exp(-tL_j)}(x_j,y_j)$ defined by  {\rm
(\ref{abcde})} which
  for some constants $C_j,b_j>0$ and ${m}>1$ satisfy the 
  following Gaussian upper bounds
    \begin{equation} \label{1.3kernel}
    \vert  p^{[j]}_{t}(x_j,y_j)  \vert \le C_j\mu(B(y_j, t^{1/m}))^{-1} \exp
    \Big(-b_j\frac{\ro_j(x_j,y_j)^{m/(m-1)}}{t^{1/(m-1)}} \Big)
    \end{equation}
  for all $t>0$ and $j\in \{1,2\}$.
  \end{assum}
  We  will call $p^{[j]}_{t}(x,y)$ the heat kernels  associated with $L_j$.
Such estimates are typical for elliptic or  sub-elliptic
differential operators  of order $m$ (see e.g. \cite{Da2}).
But such estimates hold also for most of the Laplace type operators acting
on fractals, see \cite{Ki, Str2}.

Note that Assumption~\ref{1b} implies that the operator
$K_{\exp(sL_1+tL_2)}$ has
a $L^\infty$ kernel given by the following formula
  $$
K_{\exp(sL_1+tL_2)}((x_1,x_2),(y_1,y_2))=p^{[1]}_{s}(x_1,y_1)p^{[2]}_t(x_2,y_2)
  $$
for all $s,t>0$. 
\section{Main result}
We define a family of dilations $\{\delta_t\}_{t>0}$ acting on functions $F\colon \R^2 \to C$  by
the formula
$$
\delta_{t}F(\lambda_1,\lambda_2)  =   F(t\lambda_1,t\lambda_2).
$$
Next let us recall that the norms in Sobolev spaces 
$W_{s}^{p}(\R^n)$ are defined by the formula
$$
\|F\|_{W_{s}^{p}(\R^n)}=\|(\Delta+I)^{s/2}F\|_{L^p(\R^n)},
$$ where $\Delta$ is the standard Laplace operator.
Now we can formulate our main spectral multiplier result.
  \begin{theorem}\label{main}
Suppose that  spaces $X_j$ satisfy doubling condition~{\rm (\ref{d})}
with constants $d_1$ and $d_2$. Further assume
that operators $L_j$ satisfy Assumption~{\rm \ref{1b}}  with  the same order $m$.
Let $F\colon \R^2 \to \C$ be a continuous function and let $\eta \in
C_c^\infty(0,\infty)$ be an auxiliary nonzero cut-off function.
Suppose that for some $s>\frac{d_1+d_2}{2}$
\begin{equation}\label{hc}
\sup_{t> 0} \Vert \widetilde{\eta} \, \delta_{t }F
\Vert_{W^\infty_s}<\infty,
\end{equation}
where $\widetilde{\eta}(x,y)=\eta(x+y)$.
Then  the operator $F(L_1,L_2)$ is of weak type $(1,1)$ and is bounded on
$L^q(X)$, $X=X_1\times X_2$
  for all $1<q<\infty$. In addition
    \begin{equation*}
    \|F({L_1,L_2})\|_{L^1 \to  L^{1,\infty}}
    \le  C_s \sup_{t >   0} \Vert \widetilde{\eta}
    \, \delta_t F \Vert_{W^\infty_s(\R^2)}.
    \end{equation*}
  \end{theorem}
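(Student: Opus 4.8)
\smallskip
\noindent\emph{Strategy.}
The plan is to deduce the weak type $(1,1)$ bound from the Calder\'on--Zygmund theory for operators with non--smooth kernels (\cite{DM}, in the form used in \cite{DOS}), taking $L:=L_1+L_2$ as the auxiliary operator; the $L^q$ bounds for $1<q<\infty$ then follow by interpolation with the trivial bound $\|F(L_1,L_2)\|_{2\to2}\le\|F\|_{L^\infty}$ and by duality, since $F(L_1,L_2)^*=\overline F(L_1,L_2)$ and $\overline F$ has the same Sobolev norms as $F$. Equip $X=X_1\times X_2$ with the product measure $\mu=\mu_1\times\mu_2$ and the metric $\ro((x_1,x_2),(y_1,y_2))=\max\{\ro_1(x_1,y_1),\ro_2(x_2,y_2)\}$. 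Then $(X,\ro,\mu)$ satisfies $(\ref{d})$ with $d:=d_1+d_2$, and, since $e^{-tL}=e^{-tL_1}\otimes e^{-tL_2}$ has kernel $p^{[1]}_t(x_1,y_1)p^{[2]}_t(x_2,y_2)$, multiplying the two estimates $(\ref{1.3kernel})$ shows that $L$ satisfies Assumption~\ref{1b} on $(X,\ro,\mu)$ with the same order $m$. Hence the operators $A_r:=e^{-r^mL}$ have kernels with a Gaussian bound at scale $r$, are uniformly bounded on $L^1(X)$ and $L^2(X)$, and tend strongly to $I$ as $r\to0$; by the cited criterion it suffices to prove
\begin{equation}\label{plan-goal}
\sup_{r>0}\ \sup_{y\in X}\int_{\ro(x,y)\ge 2r}\bigl|K_{F(L_1,L_2)(I-e^{-r^mL})}(x,y)\bigr|\,d\mu(x)\ \le\ C_s\,\sup_{t>0}\|\widetilde\eta\,\delta_tF\|_{W^\infty_s(\R^2)}.
\end{equation}

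\smallskip
\noindent\emph{Dyadic decomposition adapted to $L_1+L_2$.}
As the hypothesis $(\ref{hc})$ is independent of the choice of the nonzero cut-off, fix $\eta\in C_c^\infty(1/2,2)$, $\eta\ge0$, with $\sum_{n\in\Z}\eta(2^{-n}u)=1$ for $u>0$. With $r_n:=2^{-n/m}$ and $F^{(n)}(\la_1,\la_2):=\eta\bigl(2^{-n}(\la_1+\la_2)\bigr)F(\la_1,\la_2)$ one has $F=\sum_{n\in\Z}F^{(n)}$ and $F^{(n)}=\delta_{2^{-n}}\bigl(\widetilde\eta\,\delta_{2^n}F\bigr)$, whence
$$
F^{(n)}(L_1,L_2)=G^{(n)}(r_n^mL_1,r_n^mL_2),\qquad F^{(n)}(L_1,L_2)(I-e^{-r^mL})=H^{(n)}_r(r_n^mL_1,r_n^mL_2),
$$
where $G^{(n)}:=(\widetilde\eta\,\delta_{2^n}F)\chi$, $H^{(n)}_r:=G^{(n)}\cdot\bigl(1-e^{-(r/r_n)^m(\la_1+\la_2)}\bigr)$, and $\chi\in C_c^\infty(\R^2)$ is fixed, equal to $1$ on $[0,\infty)^2\cap\{\la_1+\la_2\in[1/2,2]\}$ and supported in $\{\la_1+\la_2\in[1/4,4]\}$. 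In particular all $G^{(n)}$ and $H^{(n)}_r$ are supported in one fixed compact set $K_0\subset[0,\infty)^2$, $\sup_n\|G^{(n)}\|_{W^\infty_s}\le C\sup_t\|\widetilde\eta\,\delta_tF\|_{W^\infty_s}=:CM$, and $\|H^{(n)}_r\|_{W^\infty_s}\le C\min\{1,(r/r_n)^m\}M$ (because $1-e^{-au}$ has $W^\infty_s$-norm $\lesssim\min\{1,a\}$ over $\{u\in[1/4,4]\}$).

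\smallskip
\noindent\emph{The Plancherel-type estimate.}
The analytic heart of the argument is the following two-variable version of the Plancherel-type estimate of \cite{DOS}: for each $\gamma\ge0$ there is $C_\gamma$ such that, for every $R>0$ and every $G$ supported in $K_0$,
\begin{equation}\label{plan-planch}
\int_X\bigl|K_{G(R^{-m}L_1,R^{-m}L_2)}(x,y)\bigr|^2\,\mu\bigl(B(x,R^{-1})\bigr)\bigl(1+R\ro(x,y)\bigr)^{2\gamma}d\mu(x)\ \le\ C_\gamma\,\|G\|^2_{W^\infty_\gamma(\R^2)}
\end{equation}
uniformly in $y$. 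To prove $(\ref{plan-planch})$ I would rescale to $R=1$; since $0\le\la_i\le4$ on $K_0$, one may factor out heat semigroups and expand the remaining bounded operator by the Euclidean Fourier transform, writing
$$
K_{G(L_1,L_2)}(x,y)=\int_{\R^2}\widehat{G\,e^{\la_1+\la_2}}(\tau)\,K_{e^{-(1-i\tau_1)L_1}}(x_1,y_1)\,K_{e^{-(1-i\tau_2)L_2}}(x_2,y_2)\,d\tau,
$$
where, by Davies' perturbation method (cf.\ \cite{Da2}), the complex-time heat operators $e^{-(1-i\tau_j)L_j}$ satisfy Gaussian-type bounds at scale $(1+|\tau_j|)^{1/m}$ with an at most polynomial loss $(1+|\tau_j|)^{d_j/2+\epsilon}$ in $|\tau_j|$. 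Inserting these, the decay in $\ro_1$ and $\ro_2$ and the two volume factors are produced by the Gaussian factors in the two variables \emph{separately} (via the doubling condition and Cauchy--Schwarz at each dyadic scale in $\ro_j$), while the $\tau$-integral is controlled, again by Cauchy--Schwarz, against $\int(1+|\tau|)^{2\gamma}|\widehat{G\,e^{\la_1+\la_2}}(\tau)|^2d\tau\le C\|G\|^2_{W^\infty_\gamma}$ (the Sobolev norms of the compactly supported $G\,e^{\la_1+\la_2}$ being comparable to the $L^2$-based ones). It is precisely the product structure of the heat semigroup that lets each exponent $d_j$ be absorbed against the corresponding factor, so that the effective smoothness requirement is $\tfrac{d_1}{2}+\tfrac{d_2}{2}$ rather than $\tfrac{d_1+d_2}{2}+1$. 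Implementing this balance with the sharp exponent, adapting \cite{DOS}, is the step I expect to be the main obstacle.

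\smallskip
\noindent\emph{Summation.}
Granting $(\ref{plan-planch})$, choose $\gamma$ with $\tfrac d2<\gamma\le s$ (possible since $s>\tfrac{d_1+d_2}{2}$). For $n$ with $r_n\le r$, discard the bounded factor $1-e^{-(r/r_n)^m(\la_1+\la_2)}$ and apply $(\ref{plan-planch})$ with $R=r_n^{-1}$ to $H^{(n)}_r$; Cauchy--Schwarz together with $(\ref{d})$ and the decomposition of $\{\ro(x,y)\ge2r\}$ into shells $\{\ro(x,y)\sim2^jr_n\}$ gives $\int_{\ro(x,y)\ge2r}|K_{H^{(n)}_r(r_n^mL_1,r_n^mL_2)}(x,y)|\,d\mu(x)\le CM(r_n/r)^{\gamma-d/2}$, and $\sum_{n:\,r_n\le r}(r_n/r)^{\gamma-d/2}\le C$ since $\gamma>\tfrac d2$. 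For $n$ with $r_n>r$, keep that factor, now of size $O((r/r_n)^m)$ together with all its derivatives, so that $\|H^{(n)}_r\|_{W^\infty_\gamma}\le CM(r/r_n)^m$; $(\ref{plan-planch})$, the volume weight and $(1+\ro/r_n)^{-2\gamma}$ with $\gamma>\tfrac d2$ making the dual integral over $X$ finite, then yields $\int_X|K_{H^{(n)}_r(r_n^mL_1,r_n^mL_2)}(x,y)|\,d\mu(x)\le CM(r/r_n)^m$, and $\sum_{n:\,r_n>r}(r/r_n)^m\le C$. Summing the two bounds over $n\in\Z$ gives $(\ref{plan-goal})$ with the stated constant, which together with $\|F(L_1,L_2)\|_{2\to2}\le\|F\|_{L^\infty}\le CM$ completes the proof.
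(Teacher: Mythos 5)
Your overall architecture coincides with the paper's: the weak type $(1,1)$ bound is reduced to the Duong--McIntosh criterion with $\Phi_r=\exp(-r^m(L_1+L_2))$ (the paper's Theorem~\ref{CZ}), the multiplier is decomposed dyadically along $\lambda_1+\lambda_2$ exactly as in the paper's final argument, the key input is a two-variable weighted Plancherel-type estimate proved by factoring out $e^{-R^{-m}(L_1+L_2)}$, Fourier inversion in $\tau$, and complex-time heat kernel bounds in each variable separately (the paper's Lemmas~\ref{jeden} and~\ref{waga}, where the complex-time bounds come from the Phragm\'en--Lindel\"of lemma of Davies rather than your pointwise Gaussian-with-polynomial-loss formulation -- an equivalent ingredient), and the summation over scales is handled in the same way, with the factor $\min\{1,(r/r_n)^m\}$ from $1-e^{-(r/r_n)^m(\lambda_1+\lambda_2)}$ and the spatial decay $(1+2^nr)^{-\epsilon}$ making the series converge.

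The genuine gap is in the step you yourself flag as ``the main obstacle'': your Plancherel-type estimate with Sobolev exponent $\gamma$ just above $\frac{d_1+d_2}{2}$ is asserted, not proved, and the sketch you give does not yield it. The Cauchy--Schwarz step in the two-dimensional $\tau$-integral requires an integrable companion weight $(1+|\tau|)^{-2-\epsilon}$, so the direct argument forces the weight $(1+|\tau|)^{2\gamma+2+\epsilon}$ on $|\widehat{G\,e^{\lambda_1+\lambda_2}}|^2$, i.e.\ it proves the estimate only with $\|G\|_{W^\infty_{\gamma+1+\epsilon}}$ on the right (this is exactly the paper's intermediate bound (\ref{dwa})--(\ref{dwa2})). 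The product structure of the semigroup is what keeps $d_1$ and $d_2$ separated between the two spatial variables, but it does not remove this extra $+1$, which comes from the $\R^2$ Fourier inversion. As written, your argument therefore proves the theorem only under $s>\frac{d_1+d_2}{2}+1$. The missing idea is the Mauceri--Meda interpolation argument used in the paper's Lemma~\ref{waga}: one interpolates, for fixed $y$ and $R$, the map $F\mapsto K_{\delta_{R^{-m}}F(L_1,L_2)}(\cdot,y)$ between the unweighted bound $L^\infty([0,1]^2)\to L^2$ of Lemma~\ref{P} and the weighted bound with loss $+1+\epsilon$, and then rescales the weight exponents ($s_j'=s_j/\theta$ with $\theta$ small) to shrink the loss to an arbitrarily small $\epsilon''$. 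Incorporating this interpolation step (or an equivalent device) is what turns your outline into a proof of the stated sharp condition $s>\frac{d_1+d_2}{2}$.
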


{\em Remarks} 1. We assume that $L_1$ and $L_2$ are positive so 
$F(L_1,L_2)$ depends only on the restriction of  $F$ to
$[0,\infty)^2$. However, it is easier to state Theorem~\ref{main} if one
considers functions $F\colon \R^2 \to \C$.

2.  The condition on function $F$ in Theorem~\ref{main} is similar to the
condition in H\"ormdander-Mikhlin  Fourier multiplier result, see \cite{Her1, Mi} and
\cite[Theorem~7.9.6]{Her2}. The
only difference is that we use
the space $W^\infty_s$ instead of $W^2_s$. The significance of this difference
is discussed in \cite{DOS}.

3. It is not difficult to check that if for all $|I| \le \left[\frac{d_1+d_2}{2}\right]+1$,
where $[a]$ is an integer part of $a$, 
$$
\sup_{\lambda\in \R^2}|\lambda|^{|I|}|\partial^I F(\lambda)| < \infty,
$$
then $F$ satisfies condition (\ref{hc}), see \cite{Ale1, Ale2, LM}. In a sense condition (\ref{hc}) is a fractional exponent version of
the above condition in which $|I|$ must be an integer. The above condition is an illuminating  illustration of 
condition (\ref{hc}). In fact condition (\ref{hc}) can be stated  equivalently using interpolation between integer cases 
of the above definition. 

 Note that $\eta$    is only an auxiliary   function and that condition (\ref{hc}) does not  depend of  $\eta$.          
 
 4. It would be interesting to obtain a version of Theorem~\ref{main} with condition (\ref{hc}) replaced by 
$$
\sup_{\lambda\in \R^2}|\lambda^I\partial^I F(\lambda)| < \infty,
$$
for all $|I|\le l$ for some sufficiently large $l$, see \cite[Theorem 1.3]{MRS}.

5. It could be also interesting to try to  obtain some multivariable spectral multipliers results similar to Theorem~\ref{main}
using the techniques developed in \cite{DR, LLM}.

\subsection{Notation}
In subsequent sections we use the following notation. We put $X_P=X_1\times X_2$,
$x_P=(x_1,x_2)\in X_P$ and
$y_P=(y_1,y_2)$. Next $\mu_P=\mu_1\times \mu_2$ and we set $\rho_P(x_P,y_P)
=\max\{(\rho_1(x_1,y_1),\rho_2(x_2,y_2)\}$.  If  the discussed results hold
separately for both $X_1$ and $X_2$ spaces we just skip index $i,j$ and
use
$X,x,y,\mu$ etc.

\section{Proof of Theorem~\ref{main}}

We split the proof of Theorem~\ref{main} into a  few  lemmas. First we
show the following straightforward consequences of Assumption~\ref{1b}
  \begin{lemma}\label{ltu}
  Suppose that {\rm (\ref{1.3kernel})} and {\rm (\ref{d})} hold. Then for
all $r,t>0$
    \begin{equation}
    \int_{X-B(y,r)}|p_{t}(x,y)|^2 d\mu(x)
    \le   C    \mu(B(y,t^{1/m}))^{-1}\exp
    \left(-b\frac{r^{{m}/{(m-1)}}}{ {t^{1/(m-1)}}}\right).
    \end{equation}
  In particular
    $$
    \|p_{t}(x,\cdo)\|^2_{L^2(X)}=
    \|p_{t}(\cdo,x)\|^2_{L^2(X)} \le C \mu_j(B(x,t^{1/m}))^{-1}.
    $$
  \end{lemma}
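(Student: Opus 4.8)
The plan is to derive both statements directly from the Gaussian bound \eqref{1.3kernel} by splitting the space into dyadic annuli around $y$ and summing a geometric-type series.

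First I would write $X-B(y,r) = \bigcup_{k\ge 0} A_k$, where $A_k = B(y,2^{k+1}r)\setminus B(y,2^k r)$, and estimate the integral over each annulus. On $A_k$ we have $\ro(x,y)\ge 2^k r$, so the exponential factor in \eqref{1.3kernel} is bounded by $\exp\bigl(-b_j 2^{km/(m-1)} r^{m/(m-1)}/t^{1/(m-1)}\bigr)$; squaring the kernel bound gives a factor $\mu(B(y,t^{1/m}))^{-2}$ times this exponential squared. Integrating over $A_k\subset B(y,2^{k+1}r)$ contributes a factor $\mu(B(y,2^{k+1}r))$, and then I would use the doubling estimate \eqref{d} in the form $\mu(B(y,2^{k+1}r)) \le C(1+2^{k+1}r/t^{1/m})^{d}\,\mu(B(y,t^{1/m}))$ to convert this into $\mu(B(y,t^{1/m}))$ times a polynomial in $2^k$ and in $r/t^{1/m}$. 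The key point is that the polynomial growth in $k$ is killed by the doubly-exponential decay $\exp(-c\,2^{km/(m-1)}(r/t^{1/m})^{m/(m-1)})$, so the sum over $k$ converges and is dominated by its $k=0$ term up to a constant; the residual $r$- and $t$-dependence collapses to a single factor $\exp(-b' r^{m/(m-1)}/t^{1/(m-1)})$ (with a slightly smaller constant $b'<b$, absorbing the polynomial prefactor via $x^N e^{-x}\lesssim e^{-x/2}$). That yields the first displayed inequality.

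For the "in particular" part, I would simply take $r\to 0$, or equivalently apply the first inequality with $r$ replaced by, say, $t^{1/m}$ on the complement and handle $B(y,t^{1/m})$ separately — but more cleanly, just note that integrating $|p_t(x,y)|^2$ over all of $X$ is the $r=0$ case of the same annular decomposition (now with annuli $B(y,2^{k+1}t^{1/m})\setminus B(y,2^k t^{1/m})$ together with the central ball $B(y,t^{1/m})$), and the same geometric summation gives $\|p_t(\cdot,y)\|_{L^2}^2 \le C\mu(B(y,t^{1/m}))^{-1}$. The equality $\|p_t(x,\cdot)\|_{L^2} = \|p_t(\cdot,x)\|_{L^2}$ is immediate if one knows the heat kernel is symmetric (self-adjointness of $L_j$); alternatively one observes that the Gaussian bound \eqref{1.3kernel}, together with doubling, is symmetric in $x,y$ up to constants since $\mu(B(y,t^{1/m})) \sim \mu(B(x,t^{1/m}))$ when $\ro(x,y)\lesssim t^{1/m}$ and the far-away contribution is negligible, so the argument applies verbatim with the roles of the two variables exchanged.

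The only mildly delicate point — and the step I would be most careful about — is the bookkeeping in the doubling step: \eqref{d} must be applied with the reference radius $t^{1/m}$ (not $r$), so that the factor $\mu(B(y,r))$ one would naively get is instead correctly expressed relative to $\mu(B(y,t^{1/m}))^{-1}$, matching the normalization in \eqref{1.3kernel}. Getting the exponents right here — $d = d_j$ from \eqref{d}, and the conversion of $(1+2^{k+1}r/t^{1/m})^{d}$ into something summable against the Gaussian — is routine but is where an error would creep in. Everything else is a standard annular decomposition, and I expect no real obstacle.
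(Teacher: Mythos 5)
Your proposal is correct and follows essentially the same route as the paper: Gaussian bound, then doubling with reference radius $t^{1/m}$ to integrate the exponential over annuli. The paper merely packages the annular summation differently, peeling off a factor $\exp\bigl(-b\,r^{m/(m-1)}/t^{1/(m-1)}\bigr)$ from the squared exponential on $X\setminus B(y,r)$ and then invoking the standard estimate $\int_X \exp\bigl(-b\,(\rho(x,y)^{m}/t)^{1/(m-1)}\bigr)d\mu(x)\le C\mu(B(y,t^{1/m}))$ (as in \cite[Lemma~2.1]{CD}), which is exactly the dyadic computation you carry out by hand; your handling of the ``in particular'' statement via symmetry of the kernel is also fine.
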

\begin{proof}
By (\ref{1.3kernel}) and  (\ref{d})  (see also \cite[Lemma~2.1]{CD})
\begin{eqnarray*}
\int\limits_{X-B(y,r)}\!\!|p_{t}(x,y)|^2d\mu(x) \le
\frac{C}{\mu(B(y,t^{1/m}))^2}
\!\!\int\limits_{X-B(y,r)}\!\!\exp\left(-2b\sqrt[m-1]{\rho(x,y)^m/t}\right) d\mu(x)\\
\le \frac{C\exp(-b\sqrt[m-1]{r^m/t})}{\mu(B(y,t^{1/m}))^2}
\int_{X}\exp\big(-b\sqrt[m-1]{\rho(x,y)^{m}/t}\big) d\mu(x)\\
\le \frac{C\mu(B(y,t^{1/m}))}{\exp(-b\sqrt[m-1]{r^{m}/t})}.
\end{eqnarray*}
\end{proof}
Second we prove the following lemma

  \begin{lemma}\label{P}
  Suppose that $\|p^{[j]}_{t}(x_j, \cdo)\|^2_{L^2(X_j,\mu_j)} \le C
\mu_j(B(x_j,t^{{1}/{m}}))^{-1}$. Then
    \begin{eqnarray*}
    \Vert  K_{{F}(L_1,L_2)}  (x_P,\cdo)  \Vert^2_{L^2(X_P)}=
    \Vert  K_{\overline{F}(L_1,L_2)}  (\cdo,x_P)  \Vert^2_{L^2(X_P)}
    \nonumber \\ \le C  \Vert F \Vert_{L^\infty}^2 \prod_{j=1,2}\mu_j(B(x_j,R^{-1}))^{-1}
    \end{eqnarray*}
  for any Borel function $F \colon \R^2 \to \C$ such that
  $\supp F\subset [0,{R}^{{m}}]\times [0,{R}^{{m}}]$.
  \end{lemma}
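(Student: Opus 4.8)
The plan is to reduce the estimate on the kernel of $F(L_1,L_2)$ to the $L^2$ bound on individual heat kernels via the spectral theorem applied to a single, well-chosen function of $sL_1+tL_2$. First I would observe that since $\supp F\subset[0,R^m]\times[0,R^m]$, on this support the function $F$ can be written as $F(\lambda_1,\lambda_2)=G_{s,t}(\lambda_1,\lambda_2)\,e^{s\lambda_1+t\lambda_2}$ where $G_{s,t}(\lambda_1,\lambda_2)=F(\lambda_1,\lambda_2)e^{-s\lambda_1-t\lambda_2}$ and $s,t>0$ are parameters to be chosen of size comparable to $R^{-m}$. This gives the operator identity $F(L_1,L_2)=G_{s,t}(L_1,L_2)\circ\exp(sL_1+tL_2)$, and since $G_{s,t}$ is bounded by $\|F\|_{L^\infty}$ on the support (because $e^{-s\lambda_1-t\lambda_2}\le 1$ there for $\lambda_j\ge0$), the spectral theorem gives $\|G_{s,t}(L_1,L_2)\|_{L^2\to L^2}\le\|F\|_{L^\infty}$.

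Next, using the composition rule for kernels, $K_{F(L_1,L_2),y_P}=G_{s,t}(L_1,L_2)\,K_{\exp(sL_1+tL_2),y_P}$, so that
\begin{equation*}
\|K_{F(L_1,L_2)}(\cdo,y_P)\|_{L^2(X_P)}\le\|F\|_{L^\infty}\,\|K_{\exp(sL_1+tL_2)}(\cdo,y_P)\|_{L^2(X_P)}.
\end{equation*}
The final factor factorizes as a tensor product: by the remark following Assumption~\ref{1b},
$K_{\exp(sL_1+tL_2)}((x_1,x_2),(y_1,y_2))=p^{[1]}_s(x_1,y_1)p^{[2]}_t(x_2,y_2)$, hence its $L^2(X_P)$-norm squared is $\|p^{[1]}_s(\cdo,y_1)\|^2_{L^2(X_1)}\,\|p^{[2]}_t(\cdo,y_2)\|^2_{L^2(X_2)}$, which by the hypothesis (the conclusion of Lemma~\ref{ltu}) is bounded by $C\,\mu_1(B(y_1,s^{1/m}))^{-1}\mu_2(B(y_2,t^{1/m}))^{-1}$. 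Choosing $s=t=R^{-m}$ yields $s^{1/m}=t^{1/m}=R^{-1}$ and the claimed bound $C\|F\|_{L^\infty}^2\prod_{j}\mu_j(B(x_j,R^{-1}))^{-1}$ after relabeling; the statement for $\overline{F}$ with the kernel in the second variable follows by taking adjoints, since $K_{\overline F(L_1,L_2)}(x_P,y_P)=\overline{K_{F(L_1,L_2)}(y_P,x_P)}$.

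I do not expect a serious obstacle here: the argument is a routine decoupling of the spectral multiplier into a bounded piece times a Gaussian semigroup, combined with the product structure of the two-parameter heat kernel. The one point requiring a little care is the bookkeeping of which variable the $L^2$-norm is taken in and the symmetry between $F$ and $\overline F$; this is handled by the self-adjointness of $L_1,L_2$ (so that the kernel of the adjoint is the conjugate transpose of the kernel) and by Lemma~\ref{ltu}, which gives the $L^2$ bound on $p_t$ in both variables simultaneously. A secondary technical point is ensuring the kernel composition identity (\ref{ker1}) applies, i.e. that $\exp(sL_1+tL_2)$ maps $L^1$ into some $L^q$, $q>1$; this is immediate from the $L^\infty$ bound on its kernel recorded after Assumption~\ref{1b} together with the $L^2$ bound just used.
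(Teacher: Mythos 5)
Your overall strategy---factor $F$ into a bounded spectral multiplier composed with the two-parameter heat semigroup, apply the kernel composition rule (\ref{ker1}), exploit the tensor-product structure of the semigroup kernel together with the hypothesis (Lemma~\ref{ltu}), and take $s=t=R^{-m}$---is exactly the paper's proof. However, as written your factorization has the signs the wrong way round, and the step where this surfaces would fail. You set $G_{s,t}=Fe^{-s\lambda_1-t\lambda_2}$, so the companion factor is $e^{+s\lambda_1+t\lambda_2}$, i.e.\ the operator $e^{+sL_1+tL_2}$; this operator is unbounded and its kernel is certainly not $p^{[1]}_s(x_1,y_1)p^{[2]}_t(x_2,y_2)$ --- that is the kernel of $\exp(-sL_1-tL_2)$ (the remark after Assumption~\ref{1b} contains the same sign typo, which probably misled you). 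A symptom that something is off: in your version the support hypothesis on $F$ is never used (you bound $G_{s,t}$ by $\Vert F\Vert_{L^\infty}$ because $e^{-s\lambda_1-t\lambda_2}\le 1$), so the argument as written would apply equally to $F\equiv 1$ and ``prove'' that the identity operator has a square-integrable kernel with norm controlled by $\prod_{j}\mu_j(B(x_j,R^{-1}))^{-1}$ for every $R$, which is false.

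The repair is the paper's arrangement: put the decaying exponential into the semigroup factor and the growing one into the bounded multiplier, i.e.\ write $F=G_1\cdot G_2$ with $G_2(\lambda_1,\lambda_2)=e^{-R^{-m}(\lambda_1+\lambda_2)}$ and $G_1=Fe^{R^{-m}(\lambda_1+\lambda_2)}$. It is precisely here that $\supp F\subset[0,R^m]^2$ is essential: on that set $R^{-m}(\lambda_1+\lambda_2)\le 2$, hence $\Vert G_1\Vert_{L^\infty}\le e^{2}\Vert F\Vert_{L^\infty}$ and, by the spectral theorem, $\Vert G_1(L_1,L_2)\Vert_{L^2\to L^2}\le e^{2}\Vert F\Vert_{L^\infty}$. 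Then $K_{F(L_1,L_2)}(\cdo,y_P)=G_1(L_1,L_2)K_{G_2(L_1,L_2)}(\cdo,y_P)$ with $K_{G_2(L_1,L_2)}((x_1,x_2),(y_1,y_2))=p^{[1]}_{R^{-m}}(x_1,y_1)\,p^{[2]}_{R^{-m}}(x_2,y_2)$, after which your remaining steps (tensorization of the $L^2$ norm, the hypothesis $\Vert p_t(x,\cdo)\Vert^2_{L^2}\le C\mu(B(x,t^{1/m}))^{-1}$ with $t=R^{-m}$ so that $t^{1/m}=R^{-1}$, and the adjoint/conjugation argument relating $F$ and $\overline F$) go through verbatim and give the stated bound.
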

\begin{proof}
Set
$$G_1=\frac{F}{G_2}, \quad \mbox{\rm where}  \quad
G_2(\lambda_1,\lambda_2)=\exp(-{R}^{-{m}}(\lambda_1+\lambda_2)).$$
Then
$$
\Vert G_1(L_1,L_2) \Vert_{L^2(X_P) \to L^2(X_P)} \le
\|G_1\|_{L^\infty}\le  e\|F\|_{L^\infty}.
$$
By (\ref{ker1}) the operator $F(L_1,L_2)$ has  the kernel given by the formula
\begin{equation*}
K_{F(L_1,L_2)}(x_P,y_P)
= \left[{G}_1(L_1,L_2)K_{G_2(L_1,L_2)}(\cdo,y_P)\right](x_P).
\end{equation*}
Now
\begin{eqnarray}
&&\int_{X_P} \vert   K_{F(L_1,L_2)}(x_P,y_P) \vert^{2}  d \mu_P(y_P)\label{GF}
\\ &&\le \Vert
{G}_1(L_1,L_2) \Vert_{L^2 \to L^2}^2  \prod_{j=1,2}\Vert
p^{[j]}_{{R}^{-m}}(\cdo, x_j ) \Vert_{L^2(X_j)}^2\nonumber
\\ &&\le C   \Vert G_1\Vert^2_{L^\infty}
\prod_{j=1,2}\mu_j(B(x_j,{R}^{-1}))^{-1} \le C  \Vert F
\Vert^2_{L^\infty}\prod_{j=1,2}\mu_j(B(x_j,{R}^{-1}))^{-1}  .\nonumber
\end{eqnarray}
\end{proof}
Third we show that (compare \cite{DOS, Ou})
  \begin{lemma}\label{jeden}
  For any $s_1,s_2 \ge 0$ there exists a constant $C$ such that
    \begin{eqnarray}\label{sss}
    \int_{X_1\times X_2} \prod_{j=1,2}\Big(|p^{[j]}_{
(1+i\tau_j){R}^{-m}}(x_j,y_j)|^2\rho_j(x_j,y_j)^{s_j}\Big) d
\mu_1\times\mu_2(x_1,x_2) \nonumber
   \\ \le C  \prod_{j=1,2}\Big(\mu_j(B(y_j,{R}^{-1}))^{-1}
{R}^{-s_j}(1+|\tau_j|)^{s_j}\Big)
    \end{eqnarray}
for all  $\tau_j\in \R$ and ${R}>0$.
  \end{lemma}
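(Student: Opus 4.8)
The plan is to reduce Lemma~\ref{jeden} to Lemma~\ref{ltu} applied separately in each variable, using a standard complex-time perturbation of the Gaussian estimate together with a dyadic decomposition of $X_j$ into annuli. First I would observe that the integral over $X_1\times X_2$ factorizes as a product of two one-variable integrals
$$
\int_{X_j}|p^{[j]}_{(1+i\tau_j)R^{-m}}(x_j,y_j)|^2\rho_j(x_j,y_j)^{s_j}\,d\mu_j(x_j),
$$
so it suffices to prove the one-variable estimate
$$
\int_{X}|p_{(1+i\tau)R^{-m}}(x,y)|^2\rho(x,y)^{s}\,d\mu(x)\le C\,\mu(B(y,R^{-1}))^{-1}R^{-s}(1+|\tau|)^{s}
$$
for each $j$, and then multiply. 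The point of separating variables is that the product measure $\mu_1\times\mu_2$ and the product structure of the kernel $K_{\exp(sL_1+tL_2)}$ (noted right after Assumption~\ref{1b}) make Fubini immediate.

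Next I would need a Gaussian bound for the \emph{complex} time $z=(1+i\tau)R^{-m}$. This is the one genuinely new ingredient beyond Lemma~\ref{ltu}: from the self-adjoint Gaussian estimate \eqref{1.3kernel} at real times one deduces, by the usual analyticity/Phragmén--Lindelöf argument in the sector $|\arg z|<\pi/2$ (or by writing $e^{-zL}=e^{-\mathrm{Re}(z)L/2}\,e^{-(z-\mathrm{Re}(z)/2)L}$ and using $L^2$-boundedness of the second factor together with the on-diagonal bound for the first), an estimate of the form
$$
|p_{z}(x,y)|\le C\,\Big(\tfrac{|z|}{\mathrm{Re}\,z}\Big)^{\kappa}\mu(B(y,|z|^{1/m}))^{-1}\exp\!\Big(-b'\,\mathrm{Re}\,\tfrac{\rho(x,y)^{m/(m-1)}}{z^{1/(m-1)}}\Big),
$$
and here $\mathrm{Re}\,z=R^{-m}$ and $|z|=R^{-m}(1+\tau^2)^{1/2}$, so $|z|/\mathrm{Re}\,z=(1+\tau^2)^{1/2}\sim 1+|\tau|$, and $|z|^{1/m}\sim R^{-1}(1+|\tau|)^{1/m}$, which up to doubling comparison of balls replaces $\mu(B(y,|z|^{1/m}))^{-1}$ by $(1+|\tau|)^{d/m}\mu(B(y,R^{-1}))^{-1}$. (One may also invoke the references \cite{DOS, Ou} cited in the lemma's statement, where exactly such complex-time bounds are recorded.)

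With the complex-time kernel bound in hand, the remaining step is the elementary annular estimate: decompose $X=B(y,R^{-1})\cup\bigcup_{k\ge 0}\big(B(y,2^{k+1}R^{-1})\setminus B(y,2^kR^{-1})\big)$. On the $k$-th annulus $\rho(x,y)^{s}\le (2^{k+1}R^{-1})^{s}=R^{-s}2^{(k+1)s}$, and Lemma~\ref{ltu} (applied with the complex-time Gaussian bound, i.e. with $t$ replaced by a real time $\sim R^{-m}(1+|\tau|)$ and $r=2^kR^{-1}$) gives
$$
\int_{X\setminus B(y,2^kR^{-1})}|p_{z}(x,y)|^2\,d\mu(x)\le C\,(1+|\tau|)^{d/m}\mu(B(y,R^{-1}))^{-1}\exp\!\big(-b''\,2^{km/(m-1)}/(1+|\tau|)^{1/(m-1)}\big).
$$
Summing $R^{-s}2^{(k+1)s}$ against this over $k\ge 0$, the exponential kills the geometric factor $2^{(k+1)s}$ and produces at worst a polynomial factor in $(1+|\tau|)$; tracking the powers carefully yields the claimed $(1+|\tau_j|)^{s_j}$ on the right-hand side (the polynomial losses in $1+|\tau|$ from the measure comparison and from evaluating the sum combine to a power that is absorbed once one notes $s_j\ge0$ is at our disposal in the statement, or one simply keeps the sharp exponent). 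Multiplying the two one-variable bounds gives \eqref{sss}.

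The main obstacle I expect is the complex-time Gaussian estimate and, more precisely, bookkeeping the exact power of $(1+|\tau|)$: one must make sure that the loss coming from $\mathrm{Re}(z^{-1/(m-1)})=\mathrm{Re}\big((1+i\tau)^{-1/(m-1)}\big)R^{m/(m-1)}\sim (1+\tau^2)^{-1/(2(m-1))}R^{m/(m-1)}$ in the exponent — which \emph{weakens} the Gaussian decay by a factor polynomial in $1+|\tau|$ — is still enough to sum the annular series and that the final exponent matches $s_j$. Everything else (Fubini, the dyadic annuli, invoking Lemma~\ref{ltu}) is routine.
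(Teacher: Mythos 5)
Your overall skeleton --- Fubini to reduce to one variable, decay in complex time, then an annular summation against the weight $\rho^{s}$ --- is the same as the paper's, but the crucial middle step is where your proposal has a genuine gap. The paper never uses a pointwise complex-time Gaussian bound. Instead, for $f\in L^2$ with $\|f\|_{L^2}=1$ supported in $X\setminus B(y,r)$ it forms the normalized analytic functional $F_y(z)=e^{-zR^m}\mu(B(y,1/R))\bigl(\int_X p_z(x,y)f(x)\,d\mu(x)\bigr)^2$, bounds it on the half-plane using only the on-diagonal estimate (via $\|p_z(\cdot,y)\|_{L^2}=\|p_{|z|\cos\theta}(\cdot,y)\|_{L^2}$ and Lemma~\ref{ltu}), bounds it on the positive real axis using the full Gaussian decay of Lemma~\ref{ltu}, and then applies Davies's Phragm\'en--Lindel\"of lemma (Lemma~\ref{dav}, from \cite{Da2}) to $F_y$ itself. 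This yields $\mu(B(y,1/R))\int_{X\setminus B(y,r)}|p_{(1+i\tau)R^{-m}}(x,y)|^2\,d\mu(x)\le C\exp\bigl(-b'(rR/(1+|\tau|))^{m/(m-1)}\bigr)$ with \emph{no} polynomial prefactor in $(1+|\tau|)$, and summing annuli of width $(1+|\tau|)R^{-1}$ then gives exactly $R^{-s}(1+|\tau|)^{s}$.

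Your route relies on a pointwise bound $|p_z(x,y)|\le C(|z|/\mathrm{Re}\,z)^{\kappa}\mu(B(y,|z|^{1/m}))^{-1}\exp(\cdots)$, and this is where the trouble lies. First, your fallback derivation via $e^{-zL}=e^{-\mathrm{Re}(z)L/2}\,e^{-(z-\mathrm{Re}(z)/2)L}$ plus $L^2$-boundedness gives only an on-diagonal bound with no off-diagonal Gaussian factor, so it cannot feed the annular sum at all. Second, the pointwise complex-time bounds one can actually prove (or quote) from (\ref{1.3kernel}) carry a prefactor $(|z|/\mathrm{Re}\,z)^{\kappa}\sim(1+|\tau|)^{\kappa}$ with $\kappa>0$ depending on $d,m$, and further powers of $(1+|\tau|)$ appear when you trade $\mu(B(y,|z|^{1/m}))$ and the effective real time for $\mu(B(y,R^{-1}))$. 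These losses cannot be ``absorbed once one notes $s_j\ge0$ is at our disposal'': the exponent in (\ref{sss}) must be exactly $s_j$ (the loss is additive and independent of $s_j$; in the Euclidean model the exact kernel shows $s_j$ is sharp, so there is no slack), and any extra factor $(1+|\tau_j|)^{\kappa'}$ propagates through Lemma~\ref{waga} into a requirement $s>(d_1+d_2)/2+\kappa'$ in Theorem~\ref{main}, i.e.\ a strictly weaker main result. Avoiding exactly this loss is the point of the paper's integrated Phragm\'en--Lindel\"of device (taken from \cite{DOS}); as written, your argument proves only a lossy version of the lemma. A smaller point: your substitution ``effective real time $\sim R^{-m}(1+|\tau|)$'' is justified only for $m>2$; in general the decay is governed by $|z|^{-1/(m-1)}\cos\theta$, i.e.\ $(rR)^{m/(m-1)}(1+|\tau|)^{-m/(m-1)}$ as in the paper, which is another instance of the bookkeeping you flagged but did not resolve.
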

\begin{proof}
Note that the integral (\ref{sss}) is a product of integrals and it is
enough to show that the above estimates hold for $j=1$ and $j=2$ separately,
that is
\begin{eqnarray*}
    \int_{X_j}| p^{[j]}_{
(1+i\tau_j){R}^{-{m}}}(x_j,y_j)|^2\rho_j(x_j,y_j)^{s_j} d
\mu_j(x_j)
 \\   \le C   \mu_j(B(y_j,{R}^{-1}))^{-1} {R}^{-s_j}(1+|\tau_j|)^{s_j}
\end{eqnarray*}
holds for $j=1$ and $j=2$. The proof for $j=1$ and $j=2$ is the same so 
to simplify notation we skip the index $j$. The rest of the proof follows closely the proof in
 \cite{DOS} and we describe it here for the sake of completeness. 
First, we assume that $\|f\|_{L^2(X)}=1$ and that $\supp f \subset X-B(y,r)$.
Next, we define the holomorphic function $F_y\colon \{z\in \C \colon\ \Re
e\,z > 0\} \to \C$ by the formula
  $$
  F_y(z)=e^{-zR^m}\mu(B(y,1/R))
  \bigg(\int_X p_z(x,y)f(x) d\mu(x)\bigg)^2.
  $$
By the same argument as in (\ref{GF}) if we put  $z=|z|e^{i\theta}$, then
$\Re e \, z =|z|\cos\theta$ and 
$\|p_{z}(\cdo,y)\|^2_{L^2}=\|p_{|z|\cos\theta}(\cdo,y)\|^2_{L^2}$.
Hence by Lemma~\ref{ltu}
\begin{eqnarray*}
|F_y(z)| & \le &e^{-R^m|z|\cos\theta}\mu(B(y,1/R))\|p_{|z|\cos\theta}(\cdo,y )\|^2_{L^2}
\\ & \le & Ce^{-R^m|z|\cos\theta}
\frac{\mu(B(y,1/R))}{\mu(B(y,\sqrt[m]{|z|\cos\theta}))}
\le
Ce^{-R^m|z|\cos\theta}\Big(1+\frac{1}{R^{m}|z|\cos\theta}\Big)^{d/m}
\\      & \le & CR^{-d}(|z|\cos\theta)^{-d/m}.
\end{eqnarray*}
Similarly for $\theta=0$ by Lemma \ref{ltu}
  $$
  |F_{y}(|z|)|\le C R^{-d} |z|^{-d/m}
  \exp\bigg(-\frac{br^{m/(m-1)}}{|z|^{1/(m-1)}}\bigg).
  $$
Now let us recall the following version of Phragmen-Lindel\"of Theorem
  \begin{lemma}[{\cite[Lemma~9]{Da2}}]\label{dav}
  Suppose that function $F$ is analytic on the half-plane  $\C_+=\{ z\in \C \colon \Re e\, z >
0 \}$
  and that
    $$
    \vert F (|z|e^{i\theta}) \vert \le a_1 (|z|\cos\theta)^{-{\beta_1}}
    $$
    $$
    \vert F(|z|) \vert \le a_1 |z|^{-{\beta_1}}\exp(-a_2|z|^{-{\beta_2}})
    $$
  for some $a_1, a_2 > 0$, ${\beta_1} \ge 0$, ${\beta_2}\in(0,1]$, all
$z \in \C_+$.  Then
    $$
    \vert F(|z|e^{i\theta})  \vert   \le  a_1 2^{\beta_1}
    (|z|\cos\theta)^{-{\beta_1}}
    \exp\Big(-\frac{a_2{\beta_2}}{2}|z|^{-{\beta_2}}\cos\theta\Big)
    $$
  for all $z \in \C_+$.
  \end{lemma}
Now if $|z|e^{i\theta}=(1+i\tau)R^{-m}$, then $|z|=R^{-m}(1+|\tau|^2)^{1/2}$,
$\cos\theta=(1+|\tau|^2)^{-1/2}$ and $|z|\cos\theta  =   R^{-m}$.
Putting   $a_1=CR^{-d}$, $a_2=br^{m/(m-1)}$, ${\beta_1}=d/m$
 and ${\beta_2}=1/(m-1)$ in
Lemma~\ref{dav} we conclude that
\begin{eqnarray*}
|F_y((1+i\tau)R^{-m})| \le C' \exp{\Big(-b'(rR/(1+|\tau|))^{m/(m-1)}\Big)}.
\end{eqnarray*}
Hence
\begin{eqnarray*}
\mu(B(y,1/R))\int_{X-B(y,r)} |p_{(1+i\tau)R^{-m}}(x,y)|^2d \mu(x) \\ \le
 C \exp{\Big(-b'(rR/(1+|\tau|))^{m/(m-1)}\Big)}.
\end{eqnarray*}
Finally, we have
\begin{eqnarray*}
&&\hspace{-2cm}\int_X |p_{(1+i\tau)R^{-m}}(x,y)|^2\ro(x,y)^s  d \mu(x)\\
&=& \sum_{k \ge 0} \int_{ k (1+|\tau|)R^{-1} \le \ro(x,y) \le (k+1)
(1+|\tau|)R^{-1}}
|p_{(1+i\tau)R^{-m}}(x,y)|^2\ro(x,y)^s  d \mu(x)\\
&\le& (1+|\tau|)^s R^{-s}\sum_{k \ge 0} (k+1)^s \int_{ X - B(y,k
(1+|\tau|)R^{-1}) }
|p_{(1+i\tau)R^{-m}}(x,y)|^2  d \mu(x)\\
&\le& C\mu(B(y,1/R))^{-1}R^{-s}(1+|\tau|)^{s}.
 \end{eqnarray*}
\end{proof}
Next we show that
\begin{lemma}\label{waga}
Suppose that $L_1$ and $L_2$ satisfy Assumption~{\rm (\ref{1b})}, $R>0$
and $s=s_1+s_2$. Then for any $\epsilon > 0$ there exists a constant 
$C=C(s,\epsilon)$ such that
\begin{eqnarray*}
\int_{X} \vert K_{F(L_1,L_2)}  ((x_1,x_2),(y_1,y_2))\vert^{2}
\prod_{j=1,2}(1 + {R}\rho_j(x_j,y_j) )^{s_j}
d\mu_1\times \mu_2(x_1,x_2) \nonumber\\
\le C    \Vert \delta_{R^m} F
\Vert_{W^\infty_{\frac{s}{2} + \epsilon}}^{2} \prod_{j=1,2}\mu_j(B(y_j,R^{-1}))^{-1} 
\end{eqnarray*}
for all Borel functions $F$ such that $\supp F \subseteq [0, {R}^{{m}}]^2$.
\end{lemma}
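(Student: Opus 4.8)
The plan is to represent $F(L_1,L_2)$ as a superposition of products of complex-time heat semigroups and then invoke Lemma~\ref{jeden}. Since $\supp F\subseteq[0,R^m]^2$, the function $\delta_{R^m}F$ is supported in $[0,1]^2$. Fix an auxiliary $\Psi\in C_c^\infty(\R^2)$ with $\Psi(\lambda_1,\lambda_2)=e^{\lambda_1+\lambda_2}$ on a neighbourhood of $[0,1]^2$ and set $H=\Psi\cdot\delta_{R^m}F$. Then $H$ is supported in a fixed compact set, $\delta_{R^m}F(\lambda)=e^{-\lambda_1-\lambda_2}H(\lambda)$ on $[0,\infty)^2$, and, since multiplication by the fixed function $\Psi$ is bounded on $W^\infty_\sigma(\R^2)$, one has $\|H\|_{W^\infty_\sigma}\le C_\sigma\|\delta_{R^m}F\|_{W^\infty_\sigma}$ with $C_\sigma$ independent of $R$. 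Writing $m$ for the inverse Fourier transform of $H$ one gets
\[
F(L_1,L_2)=\int_{\R^2} m(\tau)\, e^{-(1+i\tau_1)R^{-m}L_1}\,e^{-(1+i\tau_2)R^{-m}L_2}\,d\tau ,
\]
at first for $F$ smooth, where all integrals converge absolutely, and then in general by a routine density argument (if the right-hand side of the asserted estimate is infinite there is nothing to prove). Using the product form of the kernel of $\exp(sL_1+tL_2)$ from Assumption~\ref{1b}, the kernel of $F(L_1,L_2)$ is
\[
K_{F(L_1,L_2)}(x_P,y_P)=\int_{\R^2} m(\tau)\, p^{[1]}_{(1+i\tau_1)R^{-m}}(x_1,y_1)\, p^{[2]}_{(1+i\tau_2)R^{-m}}(x_2,y_2)\,d\tau .
\]

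Next I would bound the weighted $L^2$ norm of this kernel by Minkowski's integral inequality. Since $(1+R\rho_j(x_j,y_j))^{s_j}\le 2^{s_j}\bigl(1+(R\rho_j(x_j,y_j))^{s_j}\bigr)$, expanding the product over $j=1,2$ and applying Lemma~\ref{jeden} to each of the (at most four) resulting terms gives, for every $\tau\in\R^2$,
\begin{equation*}
\begin{split}
\int_{X_P}&\prod_{j=1,2}\Bigl(|p^{[j]}_{(1+i\tau_j)R^{-m}}(x_j,y_j)|^2(1+R\rho_j(x_j,y_j))^{s_j}\Bigr)\,d\mu_P(x_P)\\
&\le C\prod_{j=1,2}\mu_j(B(y_j,R^{-1}))^{-1}(1+|\tau_j|)^{s_j}.
\end{split}
\end{equation*}
Taking square roots and integrating against $|m(\tau)|$ then yields
\begin{equation*}
\begin{split}
\int_{X_P}&|K_{F(L_1,L_2)}(x_P,y_P)|^2\prod_{j=1,2}(1+R\rho_j(x_j,y_j))^{s_j}\,d\mu_P(x_P)\\
&\le C\prod_{j=1,2}\mu_j(B(y_j,R^{-1}))^{-1}\Bigl(\int_{\R^2}|m(\tau)|\prod_{j=1,2}(1+|\tau_j|)^{s_j/2}\,d\tau\Bigr)^{2},
\end{split}
\end{equation*}
so it remains to prove that $\int_{\R^2}|m(\tau)|\prod_{j=1,2}(1+|\tau_j|)^{s_j/2}\,d\tau\le C\,\|H\|_{W^\infty_{s/2+\epsilon}}$ with $s=s_1+s_2$, after which $\|H\|_{W^\infty_{s/2+\epsilon}}\le C\|\delta_{R^m}F\|_{W^\infty_{s/2+\epsilon}}$ closes the argument.

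This last estimate is the heart of the matter and, I expect, the main obstacle. A direct Cauchy--Schwarz in $\tau\in\R^2$ --- extracting an $L^2(\R^2)$ factor $(1+|\tau|)^{-1-\epsilon}$ and identifying the remaining weighted $L^2$ norm of $m$ with a Sobolev norm of $H$, for which the embedding $W^\infty_\sigma\hookrightarrow W^2_\sigma$ of compactly supported functions is free --- only produces the weaker bound $C\|H\|_{W^\infty_{s/2+1+\epsilon}}$, one derivative short of the claim; the loss comes entirely from making the $L^2$ weight in two frequency variables integrable. Obtaining the sharp order $s/2+\epsilon$ therefore requires a more careful analysis of the maps $\tau_j\mapsto p^{[j]}_{(1+i\tau_j)R^{-m}}(\cdot,y_j)$ than bounding them pointwise in $\tau$ before integrating: I would exploit the product (anisotropic) structure of the weight $\prod_j(1+|\tau_j|)^{s_j/2}$, the scale invariance of $\supp H\subset[0,1]^2$, and the fact that both Lemma~\ref{jeden} and the kernel itself factorise through the two variables, so that the estimate can be organised one variable at a time. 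This is the step I expect to demand the real work; everything else reduces to bookkeeping built on Lemmas~\ref{ltu}--\ref{jeden}.
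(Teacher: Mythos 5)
Your first half coincides with the paper's own argument: writing $\delta_{R^m}F(\lambda)=e^{-\lambda_1-\lambda_2}G(\lambda)$ with $G=e^{\lambda_1+\lambda_2}\delta_{R^m}F$ (already compactly supported, so your cutoff $\Psi$ is harmless but unnecessary), representing $F(L_1,L_2)$ through the Fourier transform of $G$ and the complex-time semigroups, and combining Minkowski's inequality with Lemma~\ref{jeden} to bound the weighted $L^2$ norm of the kernel by $\prod_{j}\mu_j(B(y_j,R^{-1}))^{-1}$ times $\big(\int_{\R^2}|\widehat G(\tau)|\prod_j(1+|\tau_j|)^{s_j/2}d\tau\big)^2$. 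But you stop exactly where the content of the lemma lies. The inequality you reduce to, $\int_{\R^2}|\widehat G(\tau)|\prod_j(1+|\tau_j|)^{s_j/2}d\tau\le C\|G\|_{W^\infty_{s/2+\epsilon}}$, is not proved, and in fact it is false for small $\epsilon$: it is a Bernstein-type inequality, and passing from $L^\infty$-based Sobolev smoothness to a weighted $L^1$ bound on the Fourier transform costs in general an extra half derivative per variable (already in one dimension, functions of H\"older class $1/2$ need not have absolutely convergent Fourier transforms, and a Rudin--Shapiro type example supported in $[0,1]$, tensored with a fixed smooth factor, violates your two-variable inequality whenever $\epsilon<1/2$). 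For the same reason the route you sketch --- organising the Cauchy--Schwarz one frequency variable at a time and exploiting the anisotropy of the weight --- cannot close the gap: each one-dimensional Cauchy--Schwarz loses $1/2+\epsilon$ derivatives, so iterating over $\tau_1$ and $\tau_2$ reproduces precisely the one-derivative deficit you identified; the loss is not an artefact of the two-variable setting, and no pointwise-in-$\tau$ refinement of Lemma~\ref{jeden} removes it.

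The paper removes the excess derivative by a mechanism your Fourier-side reduction discards: the Mauceri--Meda interpolation argument, carried out at the level of the operator $K_{y_1,y_2,R}\colon F\mapsto K_{\delta_{R^{-m}}F(L_1,L_2)}(\cdot,(y_1,y_2))$ rather than at the level of $\widehat G$. One endpoint is the derivative-free bound of Lemma~\ref{P}, $\|K_{y_1,y_2,R}\|^2_{L^\infty([0,1]^2)\to L^2(X_P)}\le C\prod_j\mu_j(B(y_j,R^{-1}))^{-1}$; the other endpoint is exactly your lossy estimate, i.e.\ boundedness from $W^\infty_{(s_1+s_2+2+\epsilon)/2}([0,1]^2)$ into $L^2$ with weight $\prod_j(1+R\rho_j(x_j,y_j))^{s_j}$. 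Interpolating with parameter $\theta$ yields the weight exponents $\theta s_j$ at smoothness $\theta(s_1+s_2+2+\epsilon)/2$; replacing $s_j$ by $s_j/\theta$ and taking $\theta$ small makes the surplus $\theta(2+\epsilon)/2$ smaller than any prescribed $\epsilon$, which is what produces the stated order $s/2+\epsilon$. This interpolation against the $L^\infty\to L^2$ bound --- which retains cancellation that is lost once the kernel is majorised by $\int|\widehat G|\cdot(\cdots)$ --- is the missing idea in your proposal, and it is also the point where the $L^\infty$-based norm $W^\infty_s$ in the hypothesis, as opposed to $W^2_s$, is genuinely used.
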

\begin{proof}
Set
$$
G(\lambda_1,\lambda_2)= e^{\lambda_1+\lambda_2}\delta_{{R}^{m}}
F(\lambda_1,\lambda_2).
$$
In virtue of the Fourier inversion formula
\begin{eqnarray*}
&&F(L_1,L_2)=G(R^{-m}L_1, R^{-m}L_2)\exp{(-R^{-m}(L_1+L_2))
}\\&&\hspace{0.5cm}=\frac{1}{4\pi^2}\int_{\R^2}
\exp{\left( \sum_{j=1,2}(i\tau_j-1)R^{-m}L_j \right)}
\widehat{G}(\tau_1,\tau_2)
d \tau_1 d \tau_2,
\end{eqnarray*}
where $\widehat{G}$ is the Fourier transform of  the function $G$. Hence
\begin{eqnarray*}
 &&K_{F(L_1,L_2)}((x_1,x_2),(y_1,y_2))  \\&&\hspace{0.5cm}= \frac{1}{4\pi^2}
\int_{\R^2}\widehat{G}(\tau_1,\tau_2)
\prod\limits_{j=1,2} p^{[j]}_{(1-i\tau_j){R}^{-m}}(x_j,y_j)   d \tau_1 d
\tau_2.
\end{eqnarray*}
Thus   by Lemma~\ref{jeden} and Lemma~\ref{ltu}
  \begin{eqnarray}
    &&\Bigg( \int_{X_1\times X_2} \vert
K_{F(L_1,L_2)}((x_1,x_2),(y_1,y_2))
  \vert^{2}  \nonumber \\ &&\hspace{2cm} \times(1 + R\ro_1(x_1,y_1)
)^{s_1}(1 + R\ro_2(x_2,y_2) )^{s_2}
d\mu_1\times \mu_2(x_1,x_2) \Bigg)^{\frac{1}{2}}   \nonumber\\
  &&\le\int\limits_{\R^2} \! \vert  \widehat{G}(\tau_1,\tau_2)\vert
  \Big(\prod_{j=1,2}\int\limits_{X_j} \! |p^{[j]}_{(1-i  \tau_j)R^{-m}}(x_j,y_j)|^2
(1+R\ro_j(x_j,y_j))^{s_j}
  d\mu_j(x_j)\Big)^{\frac{1}{2}} d  \tau_1 d \tau_2  \nonumber\\
  &&\le C\Big( \prod_{j=1,2}\mu_j(B(y_j,R^{-m})^{-\frac{1}{2}}\Big)
    \int_{\R^2}
  \vert \widehat{G}(\tau_1,\tau_2) \vert
(1+|\tau_1|)^{s_1/2}(1+|\tau_2|)^{s_2/2}
  d \tau_1 d \tau_2  \nonumber\\
  &&\hspace{1cm} \le  C  \Big(\prod_{j=1,2}\mu_j(B(y_j,R^{-m})^{-\frac{1}{2}}\Big)\left(\int_{\R^2} \vert \widehat{G}(\tau_1,\tau_2) \vert^{2}
  ({1+\tau_1^2+\tau_2^{2}})^{\frac{s_1+s_2+\epsilon+2}{2}}\right)^{\frac{1}{2}}
\nonumber \\ &&\hspace{2cm}
 \times 
  \left(\int_{\R^2}({1+\tau_1^2+\tau_2^{2}})^{\frac{-2-\epsilon}{2}}\right)^{\frac{1}{2}}
\label{dwa}\\
  &&  \hspace{4cm} \le C   
  \|G\|_{W^2_{\frac{s_1+s_2+2+\epsilon}{2}}}\prod_{j=1,2}\mu_j(B(y_j,R^{-{m}})^{-\frac{1}{2}}.
\nonumber
\end{eqnarray}
However, $\supp F  \subseteq  [0, {R}^{{m}}]^2$ and $\supp \delta_{{R}^m}F
\subseteq [0,1]^2$ so
\begin{equation}\label{dwa2}
  \|G\|_{W^2_{{(s+2+\epsilon)}/{2}}}
\le C \|\delta_{R^{m}} F\|_{W^2_{(s+2+\epsilon)/{2}}}
\le C \|\delta_{R^{m}} F\|_{W^\infty_{(s+2+\epsilon)/{2}}}.
\end{equation}
 From the last two estimates one can obtain a multiplier result in which
the required order of differentiability of
the function $ F$ is of $1$ greater than that of Lemma~\ref{waga}.
To get rid of this additional $1$ we use Mauceri-Meda interpolation argument, see
\cite{MM} and \cite{DOS}.
First we note that the estimates from Lemma~\ref{waga}  are  equivalent to the
following inequality
\begin{eqnarray}
\int_{X} \vert K_{\delta_{R^{-m}}F(L_1,L_2)}
((x_1,x_2),(y_1,y_2))\vert^{2} \prod_{j=1,2}(1 +
R\rho_j(x_j,y_j) )^{s_j}
d\mu_1\times \mu_2(x_1,x_2)
\nonumber
\\ \le C  
\Vert F\Vert_{W^\infty_{ {\frac{s_1+s_2}{2}}+ \epsilon}}^{2} \prod_{j=1,2}  \mu_j(B(y_j,R^{-1}))^{-1} \label{dww}
\end{eqnarray}
for all bounded Borel functions $F$ such that $\supp F\subset [0,1]^2$.
Now we  define the linear operator $K_{y_1,y_2,{R}} \colon
L^\infty([0,1]^2) \to
L^2{(X_1\times X_2,\mu_1\times \mu_2)}$ by the formula
$$
K_{y_1,y_2,{R}}(F)=K_{\delta_{R^{-m}} F(L_1,L_2)}(\cdo,(y_1,y_2)).
$$
By Lemma~\ref{P}
$$
\Big\Vert K_{y_1,y_2,{R}}\Big\Vert_{L^\infty([0,1]^2) \to L^2(X_1 \times
X_2 ,\mu_1 \times \mu_2)}^2\le C  \prod_{j=1,2} 
\mu_j(B(y_j,R^{-1}))^{-1}.
$$
Next we put  $L^2_{y_1,y_2,s_1,s_2,{R}}=L^2(X_1\times
X_2,\mu_{y_1,y_2,s_1,s_2,{R}})$, where
$$
d\mu_{y_1,y_2,s_1,s_2,R}(x_1,x_2)
=\prod_{j=1,2}(1+R\rho_j(x_j,y_j))^{s_j}d\mu_1\times \mu_2(x_1,x_2)
$$
By (\ref{dwa}) and (\ref{dwa2})
$$
 \Big\Vert K_{y,R}\Big\Vert_{W^\infty_{(s_1+s_2+2+\epsilon)/2}([0,1]^2)
\to L^2_{y_1,y_2,s_1,s_2,R}}^2
\le C    \prod_{j=1,2}  \mu_j(B(y_j,R^{-1}))^{-1}.
$$
By interpolation, 
for every $\theta \in (0,1)$ there  exists a constant $C$ such
that
\begin{eqnarray*}
&& \Vert \delta_{R}F(L_1,L_2)(\cdo,(y_1,y_2) )
\Vert_{L^{2}_{y_1,y_2,\theta s_1, \theta s_2,R}} ^2\\&& \le  {C}\Vert F \Vert_{{[L^\infty, W^\infty_{(s_1+s_2+2+\epsilon)/2}]_{[\theta]}}}^2 \prod_{j=1,2}
\mu_j(B(y_j,R^{-1}))^{-1}.
\end{eqnarray*}
In particular, for all $s>0$, $ \theta \in (0, 1)$ and $\epsilon' >
\epsilon $
\begin{eqnarray*}
&&\Vert \delta_{R^{-m}}F(L_1,L_2)(\cdo,(y_1,y_2) )
\Vert_{L^{2}_{y_1,y_2,\theta s_1, \theta s_2,R}}^2 \\&& \le  {C}\Vert F \Vert_{{ W^\infty_{(\theta s_1+\theta s_2+2\theta +\theta
\epsilon')/2}}}^2 \prod_{j=1,2}
\mu_j(B(y_j,R^{-1}))^{-1}.
\end{eqnarray*}
Hence putting $s'_j={s_j}/{\theta}$ in  the above  inequality and  taking
$\theta$ small enough we obtain
\begin{eqnarray*}
&&\Vert \delta_{R^{-m}}F(L_1,L_2)(\cdo,(y_1,y_2) )
\Vert_{L^{2}_{y_1,y_2,s'_1, s'_2,R}}^2 \\ &&\le  {C} \Vert F \Vert_{{ W^\infty_{(s'_1+s'_2)/2+\epsilon"}}}^2\prod_{j=1,2}
\mu_j(B(y_j,R^{-1}))^{-1}.
\end{eqnarray*}
 This proves (\ref{dww}) and Lemma~\ref{P}
\end{proof}
Next we show the following simple consequence of doubling condition (\ref{d}).
\begin{lemma}\label{waga2}
Suppose that {\rm (\ref{d})} holds. Then for any $s_1 > d_1$  and $s_2 >
d_2$ there exists $\epsilon>0$ such that
\begin{equation}
\int_{{X_P-B(y_P,r)}}\prod_{j=1,2}(1 + {{R}}\rho_j(x_j,y_j) )^{-s_j}
 d\mu(x) \le C  {(1+rR)^{-\epsilon}}\prod_{j=1,2}
\mu(B(y_j,{R}^{-1}))\end{equation}
where $B(y_P,r)=\{x_P \in X_P \colon \, \rho_P(x_P,y_P) < r\}$.
\end{lemma}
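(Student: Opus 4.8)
The plan is to reduce the estimate to a one-variable tail bound and then combine the two coordinates by Tonelli's theorem.

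\emph{Step 1: a one-variable tail estimate.} First I would establish that if $X$ is a space satisfying (\ref{d}) with exponent $d$ and $s>d$, then there is a constant $C$ with
$$\int_{X-B(y,r)}(1+R\rho(x,y))^{-s}\,d\mu(x)\le C(1+rR)^{-(s-d)}\mu(B(y,R^{-1}))$$
for all $y\in X$ and all $r,R>0$; the case $r=0$ gives in particular the basic bound $\int_X(1+R\rho(x,y))^{-s}\,d\mu(x)\le C\mu(B(y,R^{-1}))$. To prove this I would decompose $X-B(y,r)$ into the dyadic annuli $A_k=B(y,2^{k+1}\max\{r,R^{-1}\})\setminus B(y,2^k\max\{r,R^{-1}\})$, $k\ge0$. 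On $A_k$ one has $1+R\rho(x,y)\ge 2^k\max\{rR,1\}$, while (\ref{d}) gives $\mu(A_k)\le\mu(B(y,2^{k+1}\max\{r,R^{-1}\}))\le C 2^{kd}\max\{rR,1\}^{d}\mu(B(y,R^{-1}))$. Multiplying these and summing the geometric series $\sum_{k\ge0}2^{k(d-s)}<\infty$ (convergent precisely because $s>d$) yields the claim, since $\max\{rR,1\}^{d-s}$ is comparable to $(1+rR)^{-(s-d)}$.

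\emph{Step 2: splitting the product region.} Since $\rho_P(x_P,y_P)=\max\{\rho_1(x_1,y_1),\rho_2(x_2,y_2)\}$, the complement of the ball decomposes as
$$X_P-B(y_P,r)=\big((X_1-B(y_1,r))\times X_2\big)\cup\big(X_1\times(X_2-B(y_2,r))\big).$$
By Tonelli's theorem the integral over the first piece factors as
$$\Big(\int_{X_1-B(y_1,r)}(1+R\rho_1(x_1,y_1))^{-s_1}\,d\mu_1\Big)\Big(\int_{X_2}(1+R\rho_2(x_2,y_2))^{-s_2}\,d\mu_2\Big),$$
and symmetrically for the second piece. Applying Step 1 (the tail bound to the restricted factor, the $r=0$ bound to the full factor) shows that each piece is at most $C(1+rR)^{-(s_j-d_j)}\prod_{i=1,2}\mu_i(B(y_i,R^{-1}))$. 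Adding the two contributions and setting $\epsilon=\min\{s_1-d_1,s_2-d_2\}>0$ finishes the proof.

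\emph{Main obstacle.} There is no genuine difficulty here; the only points needing a little care are keeping track of whether $rR<1$ or $rR\ge1$ when invoking (\ref{d}) (handled by the $\max\{r,R^{-1}\}$ device above), and making sure the dyadic sum of $\mu(A_k)\,2^{-ks}$ actually converges — which it does exactly because $s>d$, and this is precisely where the hypotheses $s_1>d_1$ and $s_2>d_2$ enter.
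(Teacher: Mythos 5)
Your argument is correct, but it reaches the bound by a genuinely different route than the paper. The paper keeps the integral over the whole complement $X_P-B(y_P,r)$ in one piece: it fixes $\epsilon>0$ with $s_j-\epsilon>d_j$, observes that outside the max-metric ball at least one $\rho_j(x_j,y_j)\ge r$, so the factor $\prod_{j=1,2}(1+R\rho_j(x_j,y_j))^{-\epsilon}$ is pointwise at most $(1+rR)^{-\epsilon}$, and then bounds the remaining integrand (with exponents $s_j-\epsilon$) over all of $X_P$ by the product of the two one-variable integrals $\int_{X_j}(1+R\rho_j(x_j,y_j))^{-(s_j-\epsilon)}d\mu_j\le C\mu_j(B(y_j,R^{-1}))$, each estimated by the same dyadic-annulus-plus-doubling computation you use. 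You instead split the complement into the two slabs $(X_1-B(y_1,r))\times X_2$ and $X_1\times(X_2-B(y_2,r))$ and prove a sharper one-variable tail bound that already carries the decay $(1+rR)^{-(s-d)}$; this yields the lemma with the endpoint exponent $\epsilon=\min\{s_1-d_1,s_2-d_2\}$, marginally better than the paper's, which only gives each $\epsilon$ strictly below that value, while the paper's device of peeling off $\epsilon$ from both factors has the advantage of needing only the $r=0$ one-variable estimate. One small point to patch in your Step 1: when $r<R^{-1}$ your annuli $A_k$ cover only $X-B(y,\max\{r,R^{-1}\})$, so you must also account for the shell $B(y,R^{-1})\setminus B(y,r)$; there the integrand is at most $1$ and the measure is at most $\mu(B(y,R^{-1}))\le 2^{\,s-d}(1+rR)^{-(s-d)}\mu(B(y,R^{-1}))$ because $rR<1$, which is exactly the extra $\mu(B(y,R^{-1}))$ term the paper carries explicitly in its one-variable computation.
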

\begin{proof}
Choose $\epsilon >0$ such that $s'_j= s_j-\epsilon >d_j$ and note that
$$
\inf_{{X_P-B(y_P,r)}}\prod_{j=1,2}(1 + {R}\rho_j(x_j,y_j) )^{-\epsilon}\le
{(1+rR)^{-\epsilon}}.
$$
Hence
\begin{eqnarray*}
\int_{{X_P-B(y_P,r)}}\prod_{j=1,2}(1 + {R}\rho_j(x_j,y_j) )^{-s_j}
 d\mu_P(x_P)
\\ \le {(1+rR)^{-\epsilon}} \int_{{X_P}}     \prod_{j=1,2}(1 +
{R}\rho_j(x_j,y_j) )^{-s'_j}
d\mu_P(x_P).
\end{eqnarray*}
Next
$$
\int_{{X_P}}     \prod_{j=1,2}(1 + {R}\rho_j(x_j,y_j) )^{-s'_j}
d\mu_P(x_P) =\prod_{j=1,2} \int_{{X_j}}    (1 + {R}\rho_j(x_j,y_j) )^{-s'_j}
d\mu_j(x_j)
$$
so it is enough to show that for $j=1$ and $j=2$ one has
$$
\int_{{X_j}}  (1 + {R}\rho_j(x_j,y_j) )^{-s'_j}
d\mu_j(x_j) \le \mu_j(B(y_j,{R}^{-1})).
$$
As before we skip the index  $j$ in the proof.
\begin{eqnarray*}\label{rr1}
\int\limits_{X} (1+ R\ro(x,y)  )^{-s}d\mu(x) \le  \mu(B(y, R^{-1}))  +  \sum_{k \ge 0}
\int\limits_{2^k \le R\ro(x,y)  \le 2^{k+1}} (R  \ro(x,y))^{-s}d\mu(x)
 \\
\le  \mu(B(y, R^{-1})) +\sum_{k \ge 0} (2^k)^{-s}  \mu(B(y, 2^{k+1}/R)) \le
C\sum_{k \ge
0} (2^k)^{d-s} \mu(B(y, R^{-1}))\\
\le  \mu(B(y, R^{-1})).
\end{eqnarray*}
\end{proof}

To prove that operator is of weak type (1,1) we usually use estimates for
the gradient  of the  kernel.  The following theorem replaces the gradient
estimates in the proof of Theorem~\ref{main}.

\begin{theorem}\label{CZ}
Suppose that $\| F \|_{L^\infty} \le C_1$ and that
\begin{equation}\label{lap1}
\sup_{r\in\R^+}\sup_{y_P\in X_P}\int_{ X_P-B(y_P,r)}
|K_{F(1-\Phi_{r})(L_1,L_2)}(x_P,y_P)| d \mu_P(x_P) \leq C_1 ,
\end{equation}
where $\Phi_{r}(L_1,L_2)=\exp(-r^m(L_1+L_2) )$.  Then
$$
 \| F(L_1,L_2)\|_{L^1\to L^{1,\infty}} \le C C_1.
$$
\end{theorem}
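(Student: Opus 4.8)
The plan is to run a Calder\'on--Zygmund argument in which hypothesis (\ref{lap1}) plays the role of the pointwise gradient estimate used in the classical proof of weak type $(1,1)$; this follows the by--now standard scheme of \cite{DOS}. Since $\|F\|_{L^\infty}$, the left side of (\ref{lap1}), and $\|F(L_1,L_2)\|_{L^1\to L^{1,\infty}}$ all scale linearly in $F$, I may assume $C_1=1$; the spectral theorem then gives $\|F(L_1,L_2)\|_{L^2\to L^2}\le\|F\|_{L^\infty}\le1$. Abbreviate $\Phi_r(L_1,L_2)$ by $\Phi_r=\exp(-r^m(L_1+L_2))$. Fix $f\in L^1\cap L^2(X_P)$ and $\lambda>0$; since $X_P$ with $\rho_P=\max\{\rho_1,\rho_2\}$ is a space of homogeneous type, form its Calder\'on--Zygmund/Whitney decomposition at height $\lambda$: with $M_P$ the Hardy--Littlewood maximal operator on $X_P$, write $\Omega=\{M_Pf>\lambda\}=\bigcup_iQ_i$ with disjoint (Christ--type) cubes $Q_i$, set $b_i=f\chi_{Q_i}$, $b=\sum_ib_i$, $g=f-b=f\chi_{X_P\setminus\Omega}$; then $\|g\|_{L^\infty}\le C\lambda$, $\|g\|_{L^2}^2\le\|g\|_{L^\infty}\|g\|_{L^1}\le C\lambda\|f\|_{L^1}$, $\|b_i\|_{L^1}\le C\lambda\mu_P(Q_i)$ and $\sum_i\mu_P(Q_i)\le C\lambda^{-1}\|f\|_{L^1}$ — and we will not need the $b_i$ to have mean zero, since the factor $1-\Phi_r$ supplies the cancellation. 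Choose balls $B_i=B(y_i,r_i)\supset Q_i$ with $r_i$ comparable to the size of $Q_i$, and put $\Omega^*=\bigcup_i2B_i$, so $\mu_P(\Omega^*)\le C\lambda^{-1}\|f\|_{L^1}$ by (\ref{d}).

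The crucial step is the splitting
\begin{equation*}
F(L_1,L_2)b_i=F(1-\Phi_{r_i})(L_1,L_2)b_i+F(L_1,L_2)\,\Phi_{r_i}b_i,
\end{equation*}
which, with $h:=\sum_i\Phi_{r_i}b_i$, gives $F(L_1,L_2)f=F(L_1,L_2)g+\sum_iF(1-\Phi_{r_i})(L_1,L_2)b_i+F(L_1,L_2)h$. Hence $\mu_P(\{|F(L_1,L_2)f|>\lambda\})$ is at most the sum of $\mu_P(\Omega^*)$, of $\mu_P(\{|F(L_1,L_2)g|>\lambda/3\})$, of $\mu_P(\{x_P\notin\Omega^*:|\sum_iF(1-\Phi_{r_i})(L_1,L_2)b_i(x_P)|>\lambda/3\})$, and of $\mu_P(\{|F(L_1,L_2)h|>\lambda/3\})$. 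The first is already $\le C\lambda^{-1}\|f\|_{L^1}$. The second, by Chebyshev and the $L^2$ bound, is $\le C\lambda^{-2}\|g\|_{L^2}^2\le C\lambda^{-1}\|f\|_{L^1}$. For the third, note that if $x_P\notin\Omega^*$ and $y_P\in B_i$ then $\rho_P(x_P,y_P)\ge r_i$; so by Chebyshev, Fubini, and then (\ref{lap1}) (which bounds $\int_{X_P\setminus B(y_P,r_i)}|K_{F(1-\Phi_{r_i})(L_1,L_2)}(x_P,y_P)|\,d\mu_P(x_P)$ by $1$ for each fixed $y_P$), we get $\int_{X_P\setminus\Omega^*}|F(1-\Phi_{r_i})(L_1,L_2)b_i|\,d\mu_P\le\|b_i\|_{L^1}$, and summing over $i$ bounds the third term by $C\lambda^{-1}\|f\|_{L^1}$. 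The fourth term is $\le C\lambda^{-2}\|h\|_{L^2}^2$, so everything reduces to one inequality:
\begin{equation*}
\|h\|_{L^2(X_P)}^2\le C\lambda\|f\|_{L^1}.
\end{equation*}

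This $L^2$ almost--orthogonality bound is the heart of the proof and the step I expect to be the main obstacle, since $F(L_1,L_2)$ carries no usable pointwise kernel estimate and so $\sum_iF(L_1,L_2)\Phi_{r_i}b_i$ cannot be handled by an $L^1$ off--diagonal argument like the $(1-\Phi_{r_i})$ term but must be absorbed through $\|h\|_{L^2}$. To prove it, expand $\|h\|_{L^2}^2=\sum_{i,j}\langle\Phi_{r_i}b_i,\Phi_{r_j}b_j\rangle$ and use that, by self--adjointness, $\Phi_{r_i}^*\Phi_{r_j}=\exp(-(r_i^m+r_j^m)(L_1+L_2))$, whose kernel, by Assumption~\ref{1b} and the product formula recorded just after it, is dominated by $C\mu_P(B(y_P,t_{ij}))^{-1}\exp(-c(\rho_P(x_P,y_P)/t_{ij})^{m/(m-1)})$ with $t_{ij}\sim\max\{r_i,r_j\}$. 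Assuming $r_i\le r_j$, integrating this kernel against $|b_i|$ over $B_i$ and $|b_j|$ over $B_j$ and using $\|b_i\|_{L^1}\le C\lambda\mu_P(B_i)$ together with $\|b_j\|_{L^1}\le C\lambda\mu_P(B_j)\le C\lambda\mu_P(B(y_P,t_{ij}))$ for $y_P\in B_j$, one gets
\begin{equation*}
|\langle\Phi_{r_i}b_i,\Phi_{r_j}b_j\rangle|\le C\lambda^2\mu_P(B_i)\,\exp\!\left(-c\big(\operatorname{dist}(B_i,B_j)/r_j\big)^{m/(m-1)}\right).
\end{equation*}
Now fix $j$ and sum over the $i$ with $r_i\le r_j$: those $B_i$ with $\operatorname{dist}(B_i,B_j)\sim kr_j$ all lie in $B(y_j,C(k+1)r_j)$, whose measure is $\le C(1+k)^{d_1+d_2}\mu_P(B_j)$ by (\ref{d}), and the disjointness of the $Q_i$ keeps $\sum\mu_P(B_i)$ over these $i$ comparable to it; since $m/(m-1)>1$, $\sum_{k\ge0}(1+k)^{d_1+d_2}e^{-ck^{m/(m-1)}}<\infty$, so this partial sum is $\le C\lambda^2\mu_P(B_j)$. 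Adding over $j$ and over the symmetric range $r_i>r_j$ gives $\|h\|_{L^2}^2\le C\lambda^2\sum_j\mu_P(Q_j)\le C\lambda\|f\|_{L^1}$. Inserting this into the four--term estimate yields $\mu_P(\{|F(L_1,L_2)f|>\lambda\})\le C\lambda^{-1}\|f\|_{L^1}$; taking the supremum over $\lambda>0$ and over $f\in L^1\cap L^2$ with $\|f\|_{L^1}\le1$ (a dense subclass of $L^1$) and undoing the normalisation $C_1=1$ gives $\|F(L_1,L_2)\|_{L^1\to L^{1,\infty}}\le CC_1$.
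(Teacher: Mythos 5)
Your proposal is correct, but it takes a genuinely different (more self-contained) route than the paper. The paper proves Theorem~\ref{CZ} in a few lines by invoking the singular integral theorem of Duong and McIntosh \cite[Theorem~2]{DM}: one only checks that the semigroup $\exp(-tL_P)$, $L_P=L_1+L_2$, satisfies the hypotheses there on the doubling space $(X_P,\rho_P,\mu_P)$ (its kernel is the product $p^{[1]}_t p^{[2]}_t$, which has the required upper bounds with respect to $\rho_P=\max\{\rho_1,\rho_2\}$), and then any $L^2$-bounded operator whose kernel satisfies (\ref{lap1}) is of weak type $(1,1)$. What you have done is, in effect, reprove that cited theorem in the present setting: the Calder\'on--Zygmund decomposition without mean-zero atoms, the splitting $F=F(1-\Phi_{r_i})+F\Phi_{r_i}$ so that hypothesis (\ref{lap1}) replaces kernel regularity on the bad part, and the $L^2$ almost-orthogonality bound for $h=\sum_i\Phi_{r_i}b_i$, which you obtain by expanding $\|h\|_{L^2}^2$ and applying the product Gaussian bound for $\exp(-(r_i^m+r_j^m)L_P)$ together with a Schur-type summation over the Whitney cubes (Duong--McIntosh obtain the same $L^2$ bound by a duality/maximal-function argument instead). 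Your version buys transparency: it shows exactly where the product heat-kernel estimate and the metric $\rho_P$ enter; the paper's buys brevity and rests on a theorem valid under weaker (Poisson-type) bounds. Two small points you should make explicit in a full write-up: (i) passing from the singular-kernel identity (\ref{abcde}) (which is a bilinear statement for disjointly supported test functions) to the pointwise representation $F(1-\Phi_{r_i})(L_1,L_2)b_i(x_P)=\int K_{F(1-\Phi_{r_i})(L_1,L_2)}(x_P,y_P)b_i(y_P)\,d\mu_P(y_P)$ for a.e.\ $x_P\notin 2B_i$ requires a short justification; (ii) the comparisons $\mu_P(B_i)\le C\mu_P(Q_i)$ and the bounded-overlap count in the Schur sum use the standard properties of Christ cubes (each cube contains a ball of comparable radius), which should be stated.
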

\begin{proof}
Theorem~\ref{CZ} follows from  \cite[Theorem~2]{DM}. Indeed one can
check that the family of operators
$\exp(-tL_P)$, where $L_P=L_1+L_2$ satisfies all assumptions from
\cite{DM}. Hence any operator $T$ which is bounded on $L^2$ and whose kernel
$K_{T(1-\Phi_{r})}(L_1,L_2)$ satisfies condition (\ref{lap1}) is of weak type
$(1,1)$. See also \cite{CD, He1} and \cite{CoS} for similar results.
\end{proof}
{\em Remark}. In this note we define Calder\'on-Zygmund type operators in the sense of Theorem~\ref{CZ}
and  \cite[Theorem~2]{DM}. Note that this does not mean that the singular kernel of the operator $T$ is 
continuous outside the diagonal as it is the case  for classical Calder\'on-Zygmund operators. For quasielliptic operators which we discuss below  continuity of the 
kernel is a question of independent interest. 

\begin{proof}[Proof of Theorem~\ref{main}]
Choose   a function~$\eta$  in~$C_c^\infty(0,1)$  such that
\begin{equation*}
\sum_{n\in \Z} \eta(2^{nm}\lambda) = 1 \qquad\forall \lambda \in \R_+.
\end{equation*}
  Then
  \begin{equation*}
  F(1-\Phi_{r})(L_1,L_2) =\sum_{n\in \Z} \wet_n F(1-\Phi_{r})(L_1,L_2),
  \end{equation*}
where $\wet_n=\delta_{2^{-nm}}\wet$. Recall that $\wet(\lambda_1,\lambda_2)=\eta(\lambda_1+\lambda_2)$. 
By Lemma~\ref{waga} and Lemma~\ref{waga2} for any $s_j>{d_j}$, $j=1,2$
there exists $\epsilon, \epsilon' >0$ such that
\begin{eqnarray*}
&&
\int_{X_P-B(y_P,r)} | K_{\wet_nF(1-\Phi_{r})(L_1,L_2)}(x_P,y_P)|{d}
\mu_P(x_P)\\  &\le&  
\Big(\int_{X_P}
\vert K_{\wet_n F(1-\Phi_{r})(L_1,L_2)}
(x_P,y_P)\vert^{2}\prod_{j=1,2} (1 +   2^n\ro(x_j,y_j) )^{s_j}
d\mu_P(x_P)\Big)^{1/2}\nonumber  \\
&&\times\Big(\int_{{X_P-B(y_P,r)}}\prod_{j=1,2} (1 +   2^n\ro(x_j,y_j)
)^{-s_j}
d\mu_P(x_P)\Big)^{1/2}    \nonumber  \\
 &\leq&  C
(1+2^{n}r)^{-\epsilon/2} \Big( \prod_{j=1,2}\mu_j(B(y_j,{2}^{-n}))^{1/2}\Big) \nonumber  \\
&&\times \Big( \prod_{j=1,2}\mu_j(B(y_j,{2}^{-n}))^{-1/2}  \Big)  \|\delta_{2^{nm}}   [\wet_n
F(1-\Phi_{r})]\|_{W^\infty_{\frac{s_1+s_2+\epsilon'}{2}}}\nonumber
\\
 &\leq&  C
 (1+2^{n}r)^{-\epsilon/2}\|\delta_{2^{nm}}[\wet_n
F(1-\Phi_{r})]\|_{W^\infty_{\frac{s_1+s_2+\epsilon'}{2}}}.\nonumber
\end{eqnarray*}
Now for any Sobolev space $W_s^{p}(\R)$ and any integer $k>s$ 
\begin{eqnarray*}
\|\delta_{2^{nm}}[\wet_n   F(1-\Phi_{r})]\|_{W^{\infty}_{s}}   \leq   C
\|\delta_{2^{nm}}[\wet_n F]\|_{W_s^{p}}
\|\delta_{2^{nm}}[1-\Phi_{r}]\|_{C_k([1/4,1])}\\ \le
\frac{C 2^{nm}r^m}{1+2^{nm}r^m} \|\delta_{2^{nm}} [\wet_n F]\|_{W_{s}^{\infty}}
=\frac{C (r2^n)^{m}}{1+(r2^n)^{m}} \|\wet\delta_{2^{nm}}  F\|_{W_s^{\infty}}.
\end{eqnarray*}
Hence for $s=\frac{s_1+s_2+\epsilon'}{2}$
\begin{eqnarray}\label{koniec}
&&\hspace{-1cm}\sup_{y_P\in X_P}  \int_{X_P-B(y_P,r)}
|K_{F(1-\Phi_{r})(L_1,L_2)}(x_P,y_P)| d \mu_P(x_P)   \\
&\leq& C \sum_{n\in \Z} \frac{(r2^n)^{m}}{1+(r2^n)^{m}} \, (1+2^{n}r)^{-\epsilon/2}
 \|\wet\delta_{2^{nm}}  F\|_{W_s^{p}}
\leq C\sup_{t>0}  \|\wet\delta_{t}  F\|_{W_s^{p}} \nonumber
\end{eqnarray}
as required to prove  Theorem~\ref{main}.
\end{proof}

\section{Analysis on fractals and quasielliptic operators}\label{chfra}

Many interesting examples of spaces and operators satisfying
Assumption~\ref{1b} are described in the theory
of Brownian Motion  and Laplace like operators on fractals, see for example
\cite{Ki, Str2}.
A compelling  instance of such an operator  is  the Laplace operator on the Sierpi\'nski gasket $SG$
(Neumann or Dirichlet).
Assumption~\ref{1b} and condition (\ref{d}) hold with $d =\log 3/ (\log 5
-\log 3)=$ and
$m=d+1=\log5/ (\log5-\log3) $, see \cite{BP, Str1, Str2}. We are especially interested in this example because   
the set of ratios of eigenvalues for the Laplacian on $SG$ has gaps so it can be used to construct quasielliptic operators. Indeed the main results of \cite[Theorem~2.1]{BS}
says that  
\begin{theorem} For any two eigenvalues $\lambda_i, \lambda_j$ of the Laplacian on SG
$$
\frac{\lambda_i}{\lambda_j}\notin (\alpha,\beta)
$$
 where $\alpha$ and $\beta$ are defined by {\rm (\ref{gap})}.
\end{theorem}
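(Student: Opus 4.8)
The plan is to invoke the \emph{spectral decimation} method for $SG$ (Rammal, Fukushima--Shima; see \cite{BS, Str2}), which provides an essentially explicit description of the Dirichlet and Neumann spectra of $\Delta$ on $SG$, and then to reduce the gap statement to a finite computation. Recall the mechanism. One approximates $SG$ by the graphs $\Gamma_m$; if $\mu$ is an eigenvalue of the graph Laplacian $\Delta_{m+1}$ not lying in the forbidden set $\{2,5,6\}$, then the restriction of the corresponding eigenfunction to the vertices of $\Gamma_m$ is an eigenfunction of $\Delta_m$ with eigenvalue $\mu(5-\mu)$; conversely a non-forbidden eigenvalue $\nu$ of $\Delta_m$ lifts to level $m+1$, the lifted value being one of the two roots of $z(5-z)=\nu$, the smaller of which is $\psi(\nu)=\tfrac12\bigl(5-\sqrt{25-4\nu}\bigr)$. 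Since $5\psi(\nu)>\nu$ and $\psi(\nu)\sim\nu/5$ as $\nu\to0$, a lift gives a \emph{continuous} eigenfunction of $\Delta$ precisely when its tail is pure $\psi$-iteration; hence every nonzero eigenvalue of $\Delta$ on $SG$ has the form $\lambda=c\,5^{m}\Lambda(a)$, where $c>0$ is a fixed normalization constant (irrelevant below), $a$ runs over a finite \emph{generator set} $\mathcal G$ (the relevant members of $\{2,3,5,6\}$ together with the finitely many exceptional low-level eigenvalues of the Dirichlet, resp.\ Neumann, problem), $m$ runs over all integers $\ge m_a$ for a threshold $m_a$ depending on $a$, and
$$\Lambda(a):=\lim_{n\to\infty}5^{n}\psi_n(a),$$
with $\psi_n$ the $n$-fold composition $\psi\circ\cdots\circ\psi$. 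The function $\Lambda$ is finite, strictly increasing on $(0,\tfrac{25}{4}]$, and satisfies the conjugacy $\Lambda\circ\psi=\tfrac15\Lambda$.

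Granting this, the set of ratios $\{\lambda_i/\lambda_j\}$ of nonzero eigenvalues of $\Delta$ is contained in
$$\mathcal R:=\bigl\{\,5^{k}\,\Lambda(a)/\Lambda(b)\ :\ k\in\Z,\ a,b\in\mathcal G\,\bigr\},$$
because for each generator the admissible levels $m$ form an infinite progression $\{m_a,m_a+1,\dots\}$, so the difference of two such exponents is an arbitrary integer; thus $\mathcal R$ is invariant under multiplication by $5^{\Z}$ and is the union of finitely many such orbits. From the definition (\ref{gap}) one reads off $\alpha=\Lambda(5)/\Lambda(3)$, and the conjugacy $\Lambda\circ\psi=\tfrac15\Lambda$ identifies the value $\beta$ as $5\,\Lambda(3)/\Lambda(5)=5/\alpha$; in particular $\alpha\beta=5$ and $1<\alpha<\sqrt5<\beta<5$, so the translates $5^{k}(\alpha,\beta)$, $k\in\Z$, are pairwise disjoint. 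The theorem therefore reduces to the assertion that \emph{no element of $\mathcal R$ lies strictly between $\alpha$ and $\beta$}. Since $\mathcal R/5^{\Z}$ is finite, one reduces each $\Lambda(a)/\Lambda(b)$ into the window $[1,5)$ and checks the resulting finite list: the extremal entries $\Lambda(5)/\Lambda(3)=\alpha$ and $5\,\Lambda(3)/\Lambda(5)=\beta$ are exactly the endpoints (hence excluded from the open gap), while every other reduced ratio must be shown to satisfy $\le\alpha$ or $\ge\beta$.

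I expect the work to lie in two places. The first, and this is the real content, is establishing the explicit spectral description above for \emph{both} the Dirichlet and the Neumann Laplacian: identifying $\mathcal G$, and --- crucially --- verifying that each generator's eigenvalue actually occurs at \emph{all} sufficiently large levels $m$ (so that differences of exponents exhaust $\Z$), with the correct multiplicities. This is the delicate decimation bookkeeping of \cite{BS, Str2}, including the treatment of the forbidden values $2,5,6$ --- for example $\psi(6)=2$ is again forbidden, which forces the chains attached to $6$ to begin $6\to3\to\psi(3)\to\cdots$, i.e.\ to contribute the $\Lambda(3)$-family shifted by one level --- and of the localized eigenfunctions these produce. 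The second is making the final finite check rigorous: one must bound the transcendental quantities $\Lambda(a)$, $a\in\mathcal G$, sharply enough to separate each reduced ratio from the open interval $(\alpha,\beta)$. This is routine, since $5^{n}\psi_n(a)\uparrow\Lambda(a)$ geometrically: from $5\psi(x)-x=\tfrac{1}{25}x^{2}+O(x^{3})$ one obtains the a posteriori estimate $0\le\Lambda(a)-5^{n}\psi_n(a)\le C_a\,5^{-n}$, so a handful of iterates pins down every $\Lambda(a)$ to any required precision. Although the relative gap $\beta/\alpha=5/\alpha^{2}\approx1.18$ is modest, it stays comfortably away from $1$, so moderate precision closes every case.
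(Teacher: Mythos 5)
First, note that the paper itself gives no proof of this statement: it is quoted from \cite[Theorem~2.1]{BS}, so your outline has to be measured against the spectral decimation argument of that reference, and decimation is indeed the right tool, as are several details you record (the map $\psi$, the limit $\Lambda(a)=\lim_n 5^n\psi_n(a)$, the conjugacy $\Lambda\circ\psi=\tfrac15\Lambda$, the forced step $6\to3$). The genuine gap is your structural claim that every eigenvalue is $c\,5^m\Lambda(a)$ with $a$ in a \emph{finite} generator set $\mathcal G$, hence that $\mathcal R/5^{\Z}$ is finite and the theorem reduces to a finite numerical check. That is not the structure of the SG spectrum. In the decimation description the larger root $\lambda_{m+1}=5-\psi(\lambda_m)$ may be chosen at finitely many but \emph{arbitrarily placed} levels before the pure $\psi$-tail, and the seed at the start of the tail is then $w=5-\psi(z)\in[3,5)$ for an admissible $z$; every such choice yields an eigenvalue $c\,5^m\Lambda(w)$ with a new value of $\Lambda(w)$ modulo $5^{\Z}$. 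For instance the $5$-series continued by a single plus-choice gives $\Lambda\bigl((5+\sqrt5)/2\bigr)$, which lies strictly between $\Lambda(3)$ and $\Lambda(5)$ and is not $5^k\Lambda(a)$ for $a\in\{2,3,5\}$; iterating produces infinitely many classes mod $5^{\Z}$, accumulating at $\Lambda(5)$, i.e.\ at the endpoint $\alpha$ itself. So the set of reduced ratios is infinite and your final step (checking a finite list) is simply unavailable.

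This is not a cosmetic defect, because the statement would be false if the plus-branch seeds were unconstrained: the $2$-series belongs to the Dirichlet spectrum, $\Lambda(2)<\Lambda(3)$, and as $\Lambda(w)$ ranges over all of $[\Lambda(3),\Lambda(5)]$ the ratios $5\Lambda(2)/\Lambda(w)$ sweep an interval containing $(\alpha,\beta)$. The real content of \cite{BS} is therefore to determine exactly which seeds can occur and to show, using monotonicity of $\Lambda$ and of ratios such as $\psi_n(a)/\psi_n(b)$ in $n$, that the resulting \emph{infinite} family of ratios avoids $(\alpha,\beta)$, the endpoints arising as limits of these families rather than as extremal entries of a finite table; your ``decimation bookkeeping'' would have to be redone with that aim, and interval arithmetic on finitely many $\Lambda(a)$ cannot close it. A minor further point: as printed, the second limit in (\ref{gap}) equals $5$ (since $\psi_{n+1}(3)=\psi_n(\psi(3))$ and $\Lambda(\psi(3))=\Lambda(3)/5$), so your identification $\beta=5\Lambda(3)/\Lambda(5)$ is a plausible correction of an apparent typo rather than something read off the stated definition, and should be flagged as such.
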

 
 There are more examples of Laplacian on fractals such that the sets of ratios of their eigenvalues
 have gaps. For instance  this is the case for  hierarchical fractals introduced by Humbly $HH(b)$. For definitions 
 and a precise formulation of results, see \cite[Theorems~5.2~and~5.3]{DS}.  It  also can be noticed that the discussed result that holds on $SG$ also
 holds in the setting of \cite{Te}.  It is easy to note that if $(\alpha,\beta)$ is a gap in the set of ratios of eigenvalues $\alpha<\gamma < \beta$, $a,b\in \R$, $c,d>0$ and $\frac{d}{c}=\gamma$ then the operators defined by (\ref{qell}) are bounded on $L^2$.
 It seems likely that this short description of results concerning quasielliptic operators and gaps in 
 the sets of ratios of eigenvalues will be soon outdated. Therefore, we state our result   in an abstract way
 which could be applied to all quasielliptic operators. 
 
 \begin{definition}\label{one}
 Suppose that $L$ is a positive self-adjoint operator acting on $L^2(X)$. We say that the interval
$(\alpha, \beta)$ is a gap in the set of ratios of eigenvalues of $L$ if
$$
\frac{\lambda_i}{\lambda_j}\notin (\alpha,\beta)
$$
for all $\lambda_i,\lambda_j$ in the $L^2$ spectrum of the operator $L$. 
\end{definition}
 
Next we consider the Cartesian product of two copies of metric measure space $X$ and
operators $L_{1}=L \otimes 1$ and $ L_2=1 \otimes L$ acting on $L^2(X^2)$.

 \begin{theorem}\label{frac}
Suppose that metric measure space $(X,\mu, \rho)$ satisfies condition {\rm (\ref{d})} with doubling constant $d$
and that operator $L$ satisfy Assumption~{\rm\ref{1b}}.  Next assume that 
$(\alpha, \beta)$ is a gap in the set of ratios of eigenvalues of $L$ and that
$$
\mbox{\rm supp}\, \omega \subset \{(\lambda_1,\lambda_2)\colon
\, \left|\frac{\lambda_1}{\lambda_2}-\gamma \right| \le \sigma\}
$$
for some $\gamma, \sigma$ such that $\alpha < \gamma-\sigma<\gamma+\sigma < \beta$. 
Assume in addition that
for some $s>{d}$
$$
\sup_{t> 0} \Vert \widetilde{\eta} \, \delta_{t }\left[(1-\omega) F\right]
\Vert_{W^\infty_s}<\infty,
$$
where $ \widetilde{\eta}$ is defined in the same way as in
Theorem~{\rm\ref{main}}. 
Then  the operator $F(L_1,L_2)$ is of weak type $(1,1)$ and is bounded on
$L^q(X^2)$
  for all $1<q<\infty$.
\end{theorem}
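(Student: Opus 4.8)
The plan is to decompose $F = \omega F + (1-\omega) F$ and handle the two pieces separately. For the piece $(1-\omega) F$, the hypothesis is exactly condition (\ref{hc}) of Theorem~\ref{main} (with $d_1 = d_2 = d$, since $X_1 = X_2 = X$), so Theorem~\ref{main} applies directly and gives that $(1-\omega) F(L_1, L_2)$ is of weak type $(1,1)$ and bounded on $L^q(X^2)$ for $1 < q < \infty$. Thus the entire content of the proof is to show that the operator $\omega(L_1, L_2)$ is harmless, i.e. that it is bounded on every $L^q(X^2)$, $1 < q < \infty$, and of weak type $(1,1)$ — indeed I would aim to show it is bounded on $L^q$ for \emph{all} $1 \le q \le \infty$, which is stronger and immediately implies both conclusions for $\omega F(L_1,L_2) = \omega(L_1,L_2)\, G(L_1,L_2)$ once we know $G(L_1,L_2)$ is $L^2$-bounded (take $G$ any bounded Borel function agreeing with $F$ on the support of $\omega$; boundedness of $\omega(L_1,L_2)$ on all $L^q$ then composes with $L^2$-boundedness of $G(L_1,L_2)$, but to get $L^q$ bounds for the product one actually wants $\omega F$ itself to be, say, $L^1 \to L^1$ bounded).

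The key observation — and this is where the gap hypothesis enters — is that the operator $L_1 - \gamma^{-1} L_2$ (or any $cL_1 - dL_2$ with $d/c = \gamma$) is \emph{invertible} on $L^2(X^2)$, with bounded inverse, because the joint $L^2$-spectrum of $(L_1, L_2)$ is contained in $\{(\lambda_1, \lambda_2) : \lambda_1 = 0 \text{ or } \lambda_2 = 0 \text{ or } \lambda_1/\lambda_2 \notin (\alpha,\beta)\}$, and $\gamma \in (\alpha+\sigma', \beta-\sigma')$ for a small margin; more to the point, $\supp \omega$ meets the joint spectrum only in a region bounded away from the lines $\lambda_1 = \lambda_2 = 0$. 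Concretely: the eigenvalues of $L$ (hence the pairs $(\lambda_i, \lambda_j)$) form a discrete set whose ratios avoid $(\alpha,\beta)$, so on $\supp\omega \cap \mathrm{spec}$ we have a \emph{uniform lower bound} $|\lambda_1| + |\lambda_2| \ge c_0 > 0$ away from the origin is NOT automatic — but the point is that for \emph{large} $\lambda_1 + \lambda_2$, $\omega$ restricted to the spectrum becomes, after the dilation $\delta_t$ and multiplication by $\wet$, a smooth symbol, and for \emph{bounded} $\lambda_1 + \lambda_2$ the operator $\omega(L_1,L_2)$ is a spectral projection-type object onto a region where $cL_1 - dL_2$ is invertible. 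I would therefore write $\omega = \omega \cdot \chi(cL_1 - dL_2)^{-1}(cL_1 - dL_2)$ is not quite right either; the cleanest route is: since $\mathrm{spec}(L) \cap (\alpha \lambda, \beta \lambda) = \emptyset$ for every eigenvalue $\lambda$, the function $\omega$ vanishes on a neighborhood (in $\R^2$) of the joint spectrum unless $\lambda_1, \lambda_2$ are both small — but eigenvalues accumulate only at $\infty$ for a Laplacian on a compact fractal, so there are only finitely many pairs $(\lambda_i,\lambda_j)$ in $\supp\omega \cap \{\lambda_1+\lambda_2 \le N\}$ — hence $\omega(L_1,L_2)$ restricted to low frequencies is a finite-rank operator with $L^\infty$ (indeed bounded, compactly supported, continuous) kernel, trivially bounded on all $L^q$.

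For the high-frequency part of $\omega$, I would invoke Theorem~\ref{main} again: one shows that $\omega$ itself (or $\omega$ times a smooth cutoff to $\{\lambda_1 + \lambda_2 \ge N\}$) satisfies the H\"ormander condition (\ref{hc}). This is plausible because $\omega$ is assumed to live in the cone $\{|\lambda_1/\lambda_2 - \gamma| \le \sigma\}$, which is dilation-invariant, so $\delta_t \omega$ behaves uniformly in $t$; combined with a standard smooth bump $\eta$ this should give a finite $\sup_t \|\wet\, \delta_t \omega\|_{W^\infty_s}$, provided $\omega$ is chosen smooth (which we are free to do — $\omega$ need only be a fixed smooth function supported in that cone and equal to $1$ on a slightly smaller cone containing $\supp$ of the "bad" part of $F$'s frequency behaviour). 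Then $\omega(L_1,L_2)$ is Calder\'on-Zygmund by Theorem~\ref{main}, and $\omega F(L_1,L_2) = \omega(L_1,L_2) + [(\omega F - \omega)(L_1,L_2)]$ where the second term has frequency support still inside the cone but is now a \emph{bounded} multiplier whose relevant part lives at low frequency, i.e. finite rank. Adding up: $F(L_1,L_2) = (1-\omega)F(L_1,L_2) + \omega(L_1,L_2)\cdot(\text{bounded}) + (\text{finite rank})$, and each summand is weak $(1,1)$ and $L^q$-bounded, $1<q<\infty$.

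The main obstacle, and the step requiring the most care, is making precise the splitting into "high" and "low" frequencies so that (i) the low-frequency piece genuinely is benign — this uses that the joint spectrum of $(L_1,L_2)$ in the cone $\supp\omega$, intersected with a ball, is finite (compactness/discreteness of the spectrum of $L$, which holds for the fractal Laplacians in question) — and (ii) the high-frequency piece satisfies (\ref{hc}) uniformly, which forces one to check that the cone geometry of $\supp\omega$ is compatible with the dilation-and-cutoff norm $\sup_t\|\wet\,\delta_t(\cdot)\|_{W^\infty_s}$: the subtlety is that $\wet(\lambda) = \eta(\lambda_1+\lambda_2)$ restricts to an annulus in $\lambda_1+\lambda_2$, and one must verify $\omega$ (a fixed smooth cone multiplier) has all derivatives up to order $\lceil s\rceil$ bounded on that annulus after scaling — which it does, since $\omega$ is smooth and $0$-homogeneous-like in the relevant region, but the interplay with the requirement $s > d$ (not $s > d_1 + d_2 = 2d$!) deserves a remark: the gain from $2d$ down to $d$ is precisely what the gap buys us, because $\omega$ effectively kills one of the two "dimensions" of singularity by confining frequencies to a lower-dimensional cone. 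I would present this dimension-reduction heuristic explicitly, then reduce rigorously to Theorem~\ref{main} applied to $(1-\omega)F$ plus the elementary finite-rank/bounded-multiplier bookkeeping for the rest.
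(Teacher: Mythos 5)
Your first step --- applying Theorem~\ref{main} to $(1-\omega)F$, whose hypothesis with $d_1=d_2=d$ is exactly the assumed bound $\sup_t\Vert\wet\,\delta_t[(1-\omega)F]\Vert_{W^\infty_s}<\infty$ for $s>d=(d_1+d_2)/2$ --- is correct and is what the paper does. (Note in passing that your aside about ``$s>d$, not $s>d_1+d_2=2d$'' is a misreading: Theorem~\ref{main} requires $s>\frac{d_1+d_2}{2}$, which here is precisely $s>d$, so there is no dimension-reduction phenomenon to explain.) The genuine gap is in your treatment of $\omega F$. The whole point of the gap hypothesis is that $\omega F(L_1,L_2)=0$ identically: the joint spectral measure $E$ of $(L_1,L_2)$ is supported in $\mathrm{spec}(L)\times\mathrm{spec}(L)$, and for any two spectral points $\lambda_i,\lambda_j$ the ratio $\lambda_i/\lambda_j$ avoids $(\alpha,\beta)\supset[\gamma-\sigma,\gamma+\sigma]$; since this is a condition on ratios it is scale-invariant and holds for small eigenvalues just as for large ones, so the cone $\{|\lambda_1/\lambda_2-\gamma|\le\sigma\}$ containing $\supp\omega$ misses the support of $E$ entirely. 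Hence $\omega(L_1,L_2)=\int\omega\,dE=0$, $F(L_1,L_2)=\bigl((1-\omega)F\bigr)(L_1,L_2)$, and the theorem follows from Theorem~\ref{main} in one line. Your low/high frequency anxiety near the origin is unfounded for exactly this reason.

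Beyond missing this, the substitute argument you sketch would not go through in the stated generality. The theorem puts no smoothness hypothesis on $\omega$ (only a support condition), so you cannot claim $\omega$ satisfies condition (\ref{hc}) and feed it back into Theorem~\ref{main}; the abstract setting assumes only doubling and Assumption~\ref{1b}, with no compactness of $X$ and no discreteness of the spectrum, so the ``finitely many spectral pairs in a ball, hence finite-rank operator with bounded kernel'' step is unsupported; and the final bookkeeping identity $F(L_1,L_2)=(1-\omega)F(L_1,L_2)+\omega(L_1,L_2)\cdot(\text{bounded})+(\text{finite rank})$ would, even if each piece made sense, not yield weak $(1,1)$ bounds for a composition of an $L^2$-bounded factor with a Calder\'on--Zygmund factor without further argument. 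All of this machinery is unnecessary once one observes that the operator you are trying to control is the zero operator.
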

\begin{proof}
It is not difficult to note that if $(\alpha, \beta)$ is a gap in the set of ratios of eigenvalues of $L$ then
  $\omega F(L_1,L_2)=0$ so $F(L_1,L_2)=(1-\omega) F(L_1,L_2)$ and
Theorem~\ref{frac} follows from
Theorem~\ref{main}.
\end{proof}
 Below we describe a straightforward consequence of Theorem~\ref{frac}.
\begin{coro}\label{ko}
Suppose that for some $c,d>0$, $cL_1-dL_2$ is a quasielliptic operator, that is $\alpha< \frac{d}{c}< \beta$, 
where $(\alpha, \beta)$ is a gap in the set of ratios of eigenvalues of $L$ described in Definition~{\rm \ref{one}}.
Then for all constants $a,b \in \C$ the operators
$$
\frac{aL_1+bL_2}{cL_1-dL_2} \quad \mbox{and} \quad
\frac{L_1L_2}{(cL_1-dL_2)^2}
$$
are bounded on all $L^p$ spaces for $1<p<\infty$ and of a weak type $(1,1)$.
\end{coro}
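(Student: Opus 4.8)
The plan is to derive Corollary~\ref{ko} directly from Theorem~\ref{frac} by exhibiting, for each of the two operators, a function $F\colon \R^2\to\C$ together with a cut-off $\omega$ that isolates the offending ray $\{\lambda_1/\lambda_2=d/c\}$ and leaves a remainder $(1-\omega)F$ satisfying the H\"ormander-type condition \eqref{hc}. For the Riesz-type operator one takes $F(\lambda_1,\lambda_2)=(a\lambda_1+b\lambda_2)/(c\lambda_1-d\lambda_2)$ on the open first quadrant (the values on the axes are irrelevant since $L_1,L_2$ are positive, and one may redefine $F$ arbitrarily there, or smoothly); for the second operator one takes $F(\lambda_1,\lambda_2)=\lambda_1\lambda_2/(c\lambda_1-d\lambda_2)^2$. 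In both cases the only singularity of $F$ inside $(0,\infty)^2$ lies on the line $c\lambda_1=d\lambda_2$, i.e. exactly where $\lambda_1/\lambda_2=d/c=\gamma$.

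Next I would fix $\sigma>0$ small enough that $\alpha<\gamma-\sigma<\gamma+\sigma<\beta$ (possible since $\alpha<\gamma<\beta$), and choose $\omega\in C^\infty(\R^2)$, homogeneous of degree $0$ away from the origin, with $\omega\equiv 1$ on a conic neighbourhood $\{|\lambda_1/\lambda_2-\gamma|\le\sigma/2\}$ of the bad ray and $\supp\omega\subset\{|\lambda_1/\lambda_2-\gamma|\le\sigma\}$. Then $(1-\omega)F$ is smooth on all of $(0,\infty)^2$ and, being a quotient of homogeneous functions with nonvanishing denominator on $\supp(1-\omega)$, is itself homogeneous of degree $0$ (first operator) or degree $-1$ (second operator) wherever $1-\omega\ne 0$, smooth on the sphere. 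The key routine estimate is then that such a function satisfies
\begin{equation*}
\sup_{\lambda\in\R^2}|\lambda|^{|I|}\,|\partial^I\big((1-\omega)F\big)(\lambda)|<\infty\qquad\text{for all multi-indices }I,
\end{equation*}
which by Remark~3 after Theorem~\ref{main} implies $\sup_{t>0}\|\widetilde\eta\,\delta_t[(1-\omega)F]\|_{W^\infty_s}<\infty$ for every $s$, in particular for some $s>d$. This is where one invokes the hypothesis that $cL_1-dL_2$ is quasielliptic: it is precisely what guarantees $\gamma=d/c$ lies strictly inside the gap, so that $\omega$ can be chosen with support inside the gap, and Theorem~\ref{frac} applies with this $\omega$ and this $s$.

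Finally, since $(\alpha,\beta)$ is a gap in the set of ratios of eigenvalues of $L$, Theorem~\ref{frac} (via the observation that $\omega F(L_1,L_2)=0$, so the singular part of $F$ contributes nothing to the operator) yields that $F(L_1,L_2)$ is of weak type $(1,1)$ and bounded on $L^p(X^2)$ for $1<p<\infty$; applying this to the two choices of $F$ gives the corollary. I expect the only real point requiring care to be the derivative estimate on $(1-\omega)F$ near the axes and near infinity — one must check that homogeneity plus smoothness on the cut sphere genuinely give the scale-invariant bounds $|\lambda|^{|I|}|\partial^I((1-\omega)F)|\lesssim 1$, uniformly, and that nothing goes wrong as $\lambda_1\to 0$ or $\lambda_2\to 0$ within $\supp(1-\omega)$ (it does not, since that region stays a positive angular distance from both axes when $\gamma>0$, i.e. when $\alpha\ge 0$, which holds in all the fractal examples). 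Everything else is a direct citation of Theorem~\ref{frac} and Remark~3.
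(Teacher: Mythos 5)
Your proposal is correct and takes essentially the same route as the paper, whose entire proof is to choose $\omega\equiv 1$ on a small conic neighbourhood $\{|\lambda_1/\lambda_2-d/c|\le\epsilon\}$ of the singular ray and invoke Theorem~\ref{frac}; you simply make explicit the homogeneity/H\"ormander-condition check for $(1-\omega)F$ that the paper leaves implicit. One small slip worth fixing: $\lambda_1\lambda_2/(c\lambda_1-d\lambda_2)^2$ is homogeneous of degree $0$, not $-1$, and it is precisely this degree-$0$ homogeneity that makes your scale-invariant bounds $\sup_\lambda|\lambda|^{|I|}|\partial^I((1-\omega)F)(\lambda)|<\infty$ valid.
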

\begin{proof}
Corollary~\ref{ko} follows from Theorem~\ref{frac} if one chooses function $\omega$ such that 
$\omega(\lambda_1,\lambda_2)=1$ for all  $\lambda_1,\lambda_2$ such that $\left|\frac{\lambda_1}{\lambda_2}-\frac{d}{c}\right| \le \epsilon $ for some small $\epsilon >0$. 
\end{proof}

\begin{ex}
If $\alpha$ and $\beta$ are defined by {\rm (\ref{gap})} then the Neumann and Dirichlet Laplacian acting on 
Sierpi\'nski gasket $GS$ satisfy all assumption of Theorem~{\rm \ref{frac}} and Corollary~{\rm \ref{ko}}.
\end{ex}

\bigskip

{\bf Acknowledgement: } The author is indebted to Robert Strichartz for alerting him to  the main research problem studied in the paper and  hosting him at Cornell University.

\end{document}